\documentclass[a4paper]{amsart}
\usepackage[american]{babel}
\usepackage[T1]{fontenc}
\usepackage[utf8]{inputenc}
\usepackage{amsmath,amsfonts,amstext, dsfont}
\usepackage{amsthm,amssymb,amsmath,amscd,graphicx,epsfig,hhline,mathrsfs,latexsym,url}
\usepackage[neverdecrease]{paralist}
\usepackage{yfonts,dsfont}
\usepackage[numbers,sort&compress]{natbib}

\frenchspacing

\newcommand{\mc}{\mathcal}

\newcommand{\mb}{\mathbb}
\newcommand{\mr}{\mathrm}

\def\a{\beta} %automorphism
\def\M{\mc{M}} %von Neumann algebra

\newtheorem*{R}{Remark}
\newtheorem{T}{Theorem} \newtheorem{Le}{Lemma}
 \newtheorem{A}{Proposition}
\theoremstyle{definition}\newtheorem{D}{Definition}[section]
\theoremstyle{plain}

\textwidth=125 mm
\textheight=195 mm

\newcommand{\lin}{\text{lin}}
\newcommand{\clin}{\overline{\lin}}

\title{On the Entangled Ergodic Theorem}
\author{Tanja Eisner and D\'avid Kunszenti-Kov\'acs}
\address{Institute of Mathematics\\University of T\"ubingen\\Auf der
  Morgenstelle 10\\72076 T\"ubingen\\Germany}
\email{talo@fa.uni-tuebingen.de, daku@fa.uni-tuebingen.de}

\keywords{Entangled ergodic averages, strong convergence, unimodular eigenvalues}
\subjclass[2000]{Primary: 47A35; Secondary: 37A30}
%, 11T06}

% 47A35: Ergodic Theory [Operator theory]
% 37A30: Ergodic theorems, spectral theory, Markov operators [Ergodic theory]
% 11T06: Polynomials [Finite fields and commutative rings]

\begin{document}

\begin{abstract}
We study the convergence of the so-called entangled ergodic averages % of the form 
$$
\frac{1}{N^k}\sum_{n_1,\ldots,n_k=1}^{N}T_m^{n_{\alpha(m)}}A_{m-1}T_{m-1}^{n_{\alpha(m-1)}}A_{m-2}\ldots A_1T_1^{n_{\alpha(1)}},
$$
where $k\leq m$ and
$\alpha:\left\{1,\ldots,m\right\}\to\left\{1,\ldots,k\right\}$ is a
surjective map. We show that, on general Banach spaces and without any
restriction on the partition $\alpha$, the above averages converge
strongly as $N\to \infty$ under some quite weak compactness
assumptions on the operators $T_j$ and $A_j$. A formula for the limit
based on the spectral analysis of the operators $T_j$ 
and the continuous version of the result are presented as well. 
\end{abstract}

\maketitle

\section{Introduction}

The classical mean ergodic theorem has inspired many mathematicians and led to several generalisations and extensions. We mention Berend, Lin, Rosenblatt, Tempelman \cite{berend/lin/rosenblatt/tempelman} for modulated and subsequential ergodic theorems and e.g.~Kra \cite{kra:2007} for an overview on multilpe ergodic theorems as well as for the history of the subjects and further references.

In this note we study a further extension of the mean ergodic theorem, namely the so-called entangled ergodic theorem.  
Let $\alpha:\left\{1,\ldots,m\right\}\to\left\{1,\ldots,k\right\}$ be
a surjective map for some positive integers $k\leq m$, and take
$T_1,\ldots,T_m$ and $A_1,\ldots,A_{m-1}$ to be linear operators on a Banach
space $X$.
We investigate the convergence of the entangled Ces\`aro means
\begin{equation}\label{eq:entangled-cesaro}
\frac{1}{N^k}\sum_{n_1,\ldots,n_k=1}^{N}T_m^{n_{\alpha(m)}}A_{m-1}T_{m-1}^{n_{\alpha(m-1)}}A_{m-2}\ldots A_1T_1^{n_{\alpha(1)}}.
\end{equation}

This type of ergodic theorems was introduced by Accardi, Hashimoto, Obata \cite{accardi/hashimoto/obata:1998} motivated by quantum stochastics and was then studied by Liebscher \cite{liebscher:1999} and F. Fidaleo 
\cite{fidaleo:2007,fidaleo:2010}. 
In their studies, $T_1=\ldots=T_m=:U$ is
a unitary operator on a Hilbert space. Besides a technical assumption of Liebscher \cite{liebscher:1999},
there were two basic situations in which weak or strong convergence of
the entangled ergodic averages could be proved. The first is when $A_j$ are arbitrary and the unitary operator $U$ is almost periodic (see Definition \ref{def:alm-per} below),
see Liebscher \cite{liebscher:1999} and Fidaleo \cite{fidaleo:2010}. 
The second is when the operators $A_j$ are compact and $U$ is arbitrary unitary, see Fidaleo \cite{fidaleo:2007}.

In this paper we consider a more general situation. We assume the operators $T_j$
belong to a large class including power bounded operators on reflexive
Banach spaces. Further, we require a quite general compactness condition on the pairs $(A_j, T_j)$ generalising both of the above cases. More precisely, we make the
following assumptions. 
\begin{itemize}
\item[(A1)](Weakly compact orbits of $T_j$)

\noindent 
The operator $T_m$ is power bounded and totally ergodic and each of $T_1,\ldots,
T_{m-1}$ has 
relatively weakly compact orbits, i.e., $\{T_j^n x:\,
n\in \mb{N}\}$ is relatively compact in $X$ in the weak
topology for every $x\in X$ and 
$1\leq j\leq m-1$.

\item[(A2)](Joint compactness of $(A_j,T_j)$)

\noindent Every $A_j$ is compact on the orbits of $T_j$, i.e., 
$\{A_jT_j^n x:\, n\in \mb{N}\}$ is relatively compact in $X$
for every $x\in X$ and $1\leq j\leq m-1$.
\end{itemize}
Recall that an operator $T$ is called totally ergodic if 
%it is power bounded and 
the operators
$\lambda T$ are mean ergodic for every $\lambda\in\mb{T}$, $\mb{T}$
the unit circle. Every operator with relatively weakly compact orbits
is automatically totally ergodic. 
Note that assumption (A1) is not very restrictive. As mentioned above, every power
bounded operator on a reflexive Banach space has relatively weakly
compact orbits by the Banach--Alaoglu theorem. Another important
class of examples is given by power bounded positive operators on a Banach lattice
$L^1(\mu)$ preserving an order interval generated by a strictly positive function, see
e.g.~Schaefer \cite[Thm. II.5.10(f) and Prop. II.8.3]{schaefer:1974}
or \cite[Section I.1]{eisner-book} for further information.
While forming a large class, operators 
with relatively weakly compact orbits admit good asymptotic properties, see Theorem \ref{J-Gl-dL}
below. 

Under the assumptions (A1) and (A2) we show that the entangled Ces\`aro means
(\ref{eq:entangled-cesaro}) converge strongly and describe their limit
operator, see Theorem \ref{thm-general-case}. It turns out that only
specially interacting  
projections corresponding to unimodular
eigenvalues of $T_j$ (in combination with the operators $A_j$) contribute to the limit. 

The paper is organised as follows. We first treat the special case
assuming that all but the last $T_j$ are almost periodic
(Section \ref{section:almost-periodic}). Here we show how to reduce the
problem to the case when $T_1=\ldots=T_{m-1}$ and $A_1=\ldots=A_{m-1}$. In the context of Hilbert spaces and pair partitions, this case was considered recently in Fidaleo \cite{fidaleo:2010}. In
Section \ref{section:general-case} the general case is treated. We
further discuss the connection to non-commutative multiple ergodic theorems, them being 
an important recent field of research. 
In Section \ref{section:continuous-case}, we finally study the case of strongly continuous
semigroups and the strong convergence of the ergodic averages
\[
\frac{1}{t^k}\int_{[0,t]^k}T_m(s_{\alpha(m)})A_{m-1}T_{m-1}(s_{\alpha(m-1)})A_{m-2}\ldots A_1T_1(s_{\alpha(1)})\, ds_1\ldots ds_{k}.
\]
Note that the study of convergence of entangled ergodic averages for the continuous time scale seems not be new.

\vspace{0.3cm}

Before proceeding we first show by an example that one cannot drop assumption (A2)
even when $m=2$, the operator $T_1=T_2=:U$ is unitary and weakly
stable, i.e., satisfies  $\lim_{n\to\infty}U^n=0$ in the weak operator
topology. 

\begin{A}
There exists a Hilbert space $H$, a weakly stable unitary operator $U$ on $H$ and an operator $A\in\mc{L}(H)$ such that the Ces\`aro means
\[
\frac{1}{N}\sum_{n=1}^N U^nAU^n
\]
do not converge weakly.
\end{A}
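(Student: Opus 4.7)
The plan is to construct an explicit counterexample on $H = \ell^2(\mb{Z})$. Let $(e_k)_{k\in\mb{Z}}$ denote the canonical orthonormal basis and let $U$ be the bilateral shift, $Ue_k := e_{k+1}$. Then $U$ is unitary, and since $\langle U^n e_j, e_k\rangle = \delta_{j+n,k}$ vanishes for $|n|$ large, a standard density argument yields $U^n\to 0$ in the weak operator topology, so $U$ is weakly stable.

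Next I would take $A$ to be an anti-diagonal operator $Ae_k := a_k e_{-k}$, where $(a_k)_{k\geq 1}$ is a bounded scalar sequence (and $a_k := 0$ for $k\leq 0$). This is bounded on $H$ with $\|A\| = \sup_k |a_k|$. The key observation is that $U^n A U^n$ collapses to a scalar on $e_0$: since $U^n e_0 = e_n$ and $U^n e_{-n} = e_0$, we have $U^n A U^n e_0 = a_n e_0$ for every $n\geq 1$, and therefore
\[
\left\langle \frac{1}{N}\sum_{n=1}^{N} U^n A U^n e_0,\, e_0\right\rangle = \frac{1}{N}\sum_{n=1}^{N} a_n.
\]

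To finish, I would choose a bounded sequence $(a_n)$ whose Ces\`aro means fail to converge --- for instance the indicator of the union of dyadic blocks $\bigcup_{j\geq 0}[2^{2j},\, 2^{2j+1})$, whose partial averages oscillate between roughly $1/3$ and $2/3$. Then the diagonal matrix coefficient above diverges, ruling out weak convergence of the Ces\`aro means $\frac{1}{N}\sum_{n=1}^{N} U^n A U^n$.

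There is no serious obstacle here; the construction is elementary once one notices that anti-diagonal operators decouple the problem via the identity $U^n A U^n e_0 = a_n e_0$, reducing an operator-theoretic question to the routine fact that bounded sequences need not be Ces\`aro summable. Observe also that this $A$ is not compact on the orbit of $e_0$ under $U$, since $\{A U^n e_0\} = \{a_n e_{-n}\}$ is an orthogonal set whose elements do not tend to zero; so assumption (A2) fails, consistently with what the example is designed to show.
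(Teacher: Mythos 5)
Your construction is correct and follows essentially the same route as the paper: a bilateral shift on $\ell^2(\mb{Z})$ together with an operator $A$ that reflects the forward orbit of $e_0$ back onto $e_0$, so that the matrix coefficient $\langle U^nAU^ne_0,e_0\rangle$ reproduces a bounded, non-Ces\`aro-summable scalar sequence (the paper uses $Ae_b=e_{f(-b)-b}$ with a $0$--$1$ sequence $f$, you use the anti-diagonal $Ae_k=a_ke_{-k}$; the difference is cosmetic). All steps, including the boundedness of $A$ and the weak stability of $U$, check out.
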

\begin{proof}
Let $H:=\ell^2(\mb{Z})$ and consider the standard orthonormal base $\{e_b\}_{b\in\mb{Z}}$. Let then $U$ be the unitary left shift operator. Let further $(f(n))_{n\in\mb{N}^+}$ be a $0-1$ sequence that is not Ces\`aro-summable, and define $A$ by 
\[
Ae_b:=\left\{\begin{array}{lr} e_{f(-b)-b} & \mbox{ whenever } b<0,\\ e_b & \mbox{ whenever } b\geq0.\end{array}\right.
\]
Then $A$ is a bounded operator with $\|A\|=\sqrt3$, and we have $U^nAU^ne_0=e_{f(n)}$ for all $n\in\mb{N}$. Hence $\langle U^nAU^ne_0,e_0\rangle=1-f(n)$, and since $(f(n))_{n\in\mb{N}^+}$ is not Ces\`aro-summable, the Ces\`aro-means $\frac{1}{N}\sum_{n=1}^N U^nAU^n$ cannot converge weakly either.
\end{proof}

%%%%%%%%%%%
%
%
%                 UJ SZEKCIO
%
%
%%%%%%%%%%%

\section{The almost periodic case}\label{section:almost-periodic}

We first consider the situation when all operators $T_j$ are almost
periodic. In the case when $\alpha$ is a pair partition, $X$ is a
Hilbert space and the operators $T_j$ are all equal to a unitary
operator, this has been studied by Liebscher \cite{liebscher:1999} and Fidaleo \cite{fidaleo:2010}. 

\begin{D}\label{def:alm-per}
An operator $T\in\mc{L}(X)$ acting on a Banach space is called \emph{almost periodic} if it is power bounded (i.e.~$\sup_{n\in\mb{N}}\|T^n\|<\infty$) and satisifies 
$$
X=\overline{\mathrm{lin}}\left\{x\in X\big|\exists
  \lambda\in\mb{T}: Tx=\lambda x\right\}.
$$
\end{D}

The following result is the first step towards the general situation.

\begin{T}\label{AP}
Let $X$ be a Banach space, $T_1,\ldots,T_{m-1}$ be almost periodic
operators on $X$, $T_m\in\mc{L}(X)$  
be power bounded and totally ergodic, 
$A_1,\ldots,A_{m-1}\in\mc{L}(X)$, and
$\alpha:\left\{1,\ldots,m\right\}\to\left\{1,\ldots,k\right\}$ be
surjective for some $k\leq m$. 
Then the entangled Ces\`aro means
\begin{equation}\label{entEq}
\frac{1}{N^k}\sum_{n_1,\ldots,n_k=1}^{N}T_m^{n_{\alpha(m)}}A_{m-1}T_{m-1}^{n_{\alpha(m-1)}}A_{m-2}\ldots A_1T_1^{n_{\alpha(1)}}
\end{equation}
converge strongly as $N\to\infty$.
\end{T}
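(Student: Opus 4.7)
The strategy is to exploit almost periodicity of $T_1,\ldots,T_{m-1}$ via an iterated eigenvector decomposition, which converts the entangled average into a finite sum of pure scalar Ces\`aro sums multiplied by a single vector-valued Ces\`aro average involving only $T_m$; the latter then converges strongly by total ergodicity of $T_m$.

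Denote the expression in (\ref{entEq}) by $S_N$. Power boundedness of the $T_j$'s together with boundedness of the $A_j$'s yield a uniform bound $\|S_N\|\le C$, so by a $3\varepsilon$/density argument it suffices to show that $(S_Nx)_{N\in\mb{N}}$ is Cauchy for $x$ in a dense subset of $X$, and one may replace a vector at any intermediate position in the entangled product by an $\varepsilon$-approximation at the cost of an $O(\varepsilon)$ error that is uniform in $N$ and in $n_1,\ldots,n_k$. I would fix $x\in X$, and, using almost periodicity of $T_1$, write $x\approx\sum_ic_iy_i^{(1)}$ with $T_1y_i^{(1)}=\lambda_i^{(1)}y_i^{(1)}$, $\lambda_i^{(1)}\in\mb{T}$, so that $T_1^{n_{\alpha(1)}}y_i^{(1)}=(\lambda_i^{(1)})^{n_{\alpha(1)}}y_i^{(1)}$. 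Next I would approximate each vector $A_1y_i^{(1)}$ by a finite combination of eigenvectors of $T_2$, and iterate this procedure through the stages $j=2,\ldots,m-1$. After $m-1$ such decompositions one obtains, up to an accumulated error $O(\varepsilon)$ independent of $N$,
$$S_Nx\;\approx\;\sum_{I}c_I\,\frac{1}{N^k}\sum_{n_1,\ldots,n_k=1}^{N}\Bigl(\prod_{j=1}^{m-1}\lambda_{I,j}^{n_{\alpha(j)}}\Bigr)\,T_m^{n_{\alpha(m)}}w_I,$$
where $I$ runs over a finite index set, the scalars $\lambda_{I,j}\in\mb{T}$ are the eigenvalues produced in the decomposition, and the $w_I\in X$ are fixed vectors.

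The remaining average factorises over the free indices $n_1,\ldots,n_k$. For each $i\ne\alpha(m)$ one has a pure scalar Ces\`aro sum $\frac{1}{N}\sum_{n=1}^{N}\mu_{I,i}^{n}\to\mathbf{1}_{\{\mu_{I,i}=1\}}$, where $\mu_{I,i}:=\prod_{j:\alpha(j)=i,\,j\le m-1}\lambda_{I,j}$. For $i=\alpha(m)$ the sum equals $\frac{1}{N}\sum_{n=1}^{N}(\mu_{I,\alpha(m)}T_m)^{n}w_I$, which converges strongly because total ergodicity of $T_m$ guarantees mean ergodicity of $\mu_{I,\alpha(m)}T_m$. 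Hence each of the finitely many summands converges strongly, the approximating operator sequence converges strongly as $N\to\infty$, and letting $\varepsilon\to 0$ shows that $(S_Nx)_{N\in\mb{N}}$ is Cauchy, hence convergent.

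The main technical obstacle is the bookkeeping for the iterated eigenvector approximation: since $A_j$ need not preserve eigenspaces of $T_{j+1}$, at each of the $m-1$ intermediate stages one must pull out the scalar eigenvalue accrued so far and re-expand the resulting vector in a fresh eigenbasis of the next $T_j$. That a single global $\varepsilon$-budget suffices relies on the uniform norm bound for the partial entangled products before averaging, combined with the finiteness of the number of stages. A cleaner alternative, hinted at in the introduction, is to first perform a direct-sum reduction on $X^{m-1}$ with $\mathbf{T}:=T_1\oplus\cdots\oplus T_{m-1}$ (again almost periodic) together with a single "shift-with-$A$" operator $\mathbf{A}$, thereby reducing to the case $T_1=\cdots=T_{m-1}$ and $A_1=\cdots=A_{m-1}$, so that only one eigenvector decomposition is needed; this streamlines the combinatorics of $\alpha$ but is inessential to the argument.
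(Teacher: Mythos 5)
Your argument is correct, and it uses the same essential ingredients as the paper (density of unimodular eigenvectors from almost periodicity, uniform boundedness of the entangled products to justify an $\varepsilon$-approximation, and total ergodicity of $T_m$ to handle the last factor), but it is organised quite differently. The paper proves the theorem by induction on $m$, carrying along the explicit limit formula \eqref{limitEq}: at each step it fixes an eigenvector $x$ of $T_1$ with eigenvalue $\lambda$ and, when $\alpha^{-1}(\alpha(1))$ is not a singleton, absorbs the scalar $\lambda^{n_{\alpha(1)}}$ into another operator $T_l$ sharing the same summation index, turning it into $(\lambda T_l)^{n_{\alpha(l)}}$; this keeps every intermediate object an entangled average of the same shape and never requires factorising the multi-index sum. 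You instead perform all $m-1$ eigenvector decompositions at once (up to an adaptively chosen $\varepsilon$-budget) and then factorise $\frac{1}{N^k}\sum_{n_1,\dots,n_k}$ into a product of scalar Ces\`aro sums over the indices $i\ne\alpha(m)$ and one operator Ces\`aro sum $\frac1N\sum_n(\mu\,T_m)^nw_I$. Both routes are sound; your factorisation makes the combinatorics of $\alpha$ completely transparent in one step, while the paper's induction has the advantage of producing the closed-form limit \eqref{limitEq} directly (note that the conditions $\mu_{I,i}=1$ for $i\ne\alpha(m)$ together with the mean ergodic projection of $\mu_{I,\alpha(m)}T_m$ onto the eigenspace of $T_m$ for $\overline{\mu_{I,\alpha(m)}}$ recover exactly the constraint $\prod_{i\in\alpha^{-1}(a)}\lambda_i=1$), and that formula is used later in the proof of Theorem \ref{thm-general-case}. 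Your closing remark about the direct-sum reduction on $X^m$ is precisely the content of Proposition \ref{cor:several-op-rel-comp}; as you say, it is a convenience rather than a necessity for your version of the argument.
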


\begin{proof}
We shall proceed by induction on $m$, giving an explicit form for the limit. 

First recall that any operator with relatively weakly compact orbits
(in particular any almost periodic operator) is mean ergodic, see
e.g. \cite[Thm. I.2.9]{eisner-book}, and 
to have relatively weakly compact orbits 
is an invariant property under multiplication by a unimodular constant. 
We may therefore introduce the mean ergodic projections
\[
 P^{(j)}_{\lambda}:=\lim_{N\to\infty}\frac{1}{N}\sum_{n=1}^N (\lambda^{-1} T_j)^n,
\]
where $1\leq j\leq m$ and $|\lambda|=1$.

We now show that the limit of the entangled Ces\`aro means $(\ref{entEq})$ is given by the formal sum
\begin{equation}\label{limitEq}
 \sum_{\substack{\lambda_j\in\sigma_j\, (1\leq j\leq m)\\\prod_{i\in\alpha^{-1}(a)}\lambda_i=1\, (1\leq a\leq k)}}
P^{(m)}_{\lambda_m}A_{m-1}P^{(m-1)}_{\lambda_{m-1}}A_{m-2}\ldots A_1P^{(1)}_{\lambda_1}.
\end{equation}
Here, $\sigma_j$ denotes the point spectrum of $T_j$ ($1\leq j\leq m$)
and (\ref{limitEq}) should be understood as the strong limit of the net
\[
\left\{
 \sum_{\substack{\lambda_j\in F_j\, (1\leq j\leq m)\\\prod_{i\in\alpha^{-1}(a)}\lambda_i=1\, (1\leq a\leq k)}}
P^{(m)}_{\lambda_m}A_{m-1}P^{(m-1)}_{\lambda_{m-1}}A_{m-2}\ldots A_1P^{(1)}_{\lambda_1}
\right\}_{F_j\subset\sigma_j \text{finite} \,(1\leq j\leq m)}
.
\]

As seen above, this holds for $m=1$. Suppose we know that it holds for
any choice of $(m-2)$ almost periodic operators $T_j$ and $(m-2)$ bounded operators $A_i$ ($m\geq2$).
We may suppose without loss of generality that $\alpha(1)=1$.

If now $\alpha^{-1}(1)=\{1\}$, then (\ref{entEq}) can be written as
\[
 \left(\frac{1}{N^{(k-1)}}\sum_{n_2,\ldots,n_k=1}^{N}T_m^{n_{\alpha(m)}}A_{m-1}T_{m-1}^{n_{\alpha(m-1)}}A_{m-2}\ldots A_2T_2^{n_{\alpha(2)}}\right)A_1
\left(\frac{1}{N}\sum_{n_1=1}^NT_1^{n_1}\right).
\]
By the induction hypotheses and the joint continuity of multiplication in the strong operator topology, this expression converges to
\begin{eqnarray*}
 && \left(\sum_{\substack{\lambda_j\in\sigma_j\, (2\leq j\leq m)\\\prod_{i\in\alpha^{-1}(a)}\lambda_i=1\, (2\leq a\leq k)}}
P^{(m)}_{\lambda_m}A_{m-1}P^{(m-1)}_{\lambda_{m-1}}A_{m-2}\ldots A_2P^{(2)}_{\lambda_2}\right)A_1P^{(1)}_1\\
&=& \sum_{\substack{\lambda_j\in\sigma_j\, (1\leq j\leq m)\\\prod_{i\in\alpha^{-1}(a)}\lambda_i=1\, (1\leq a\leq k)}}
P^{(m)}_{\lambda_m}A_{m-1}P^{(m-1)}_{\lambda_{m-1}}A_{m-2}\ldots A_1P^{(1)}_{\lambda_1}.
\end{eqnarray*}

If, on the other hand, there exists an $1<l\leq m$ with $\alpha(l)=1$, then consider an eigenvector $x\in X$ of $T_1$ pertaining to some eigenvalue $\lambda\in\mb{T}$.
We can rewrite the entangled means applied to $x$ as

\begin{eqnarray*}
 &&\frac{1}{N^k}\sum_{n_1,\ldots,n_k=1}^{N}T_m^{n_{\alpha(m)}}A_{m-1}T_{m-1}^{n_{\alpha(m-1)}}A_{m-2}\ldots A_1T_1^{n_{\alpha(1)}}x\\
&=&\frac{1}{N^k}\sum_{n_1,\ldots,n_k=1}^{N}T_m^{n_{\alpha(m)}}A_{m-1}T_{m-1}^{n_{\alpha(m-1)}}A_{m-2}\ldots A_1\lambda^{n_{\alpha(1)}}x\\
&=&\frac{1}{N^k}\sum_{n_1,\ldots,n_k=1}^{N}T_m^{n_{\alpha(m)}}A_{m-1}T_{m-1}^{n_{\alpha(m-1)}}A_{m-2}\ldots (\lambda T_l)^{n_{\alpha(l)}} \ldots A_2T_2^{n_{\alpha(2)}}(A_1x).
\end{eqnarray*}

\noindent This reduces the problem to the $(m-2)$ almost periodic operators
$T_2,\ldots, \lambda T_l,\ldots, T_m$, and $(m-2)$ bounded operators
$A_2,\ldots,A_{m-1}$. The induction hypotheses together with
$x=P_{\lambda}^{(1)}x$ yields that the limit is

\begin{eqnarray*}
 &&\sum_{\substack{\lambda_j\in\sigma_j\, (2\leq j\leq m, j\not=l)\\\lambda_l\in\lambda\sigma_l\\\prod_{i\in\alpha^{-1}(a)\backslash\{1\}}\lambda_i=1\, (1\leq a\leq k)}}
P^{(m)}_{\lambda_m}A_{m-1}P^{(m-1)}_{\lambda_{m-1}}A_{m-2}\ldots P^{(l)}_{\lambda^{-1}\lambda_l}\ldots A_2P^{(2)}_{\lambda_2}(A_1x)\\
&=&\sum_{\substack{\lambda_j\in\sigma_j\, (2\leq j\leq m)\\\lambda_1=\lambda\\\prod_{i\in\alpha^{-1}(a)}\lambda_i=1\, (1\leq a\leq k)}}
P^{(m)}_{\lambda_m}A_{m-1}P^{(m-1)}_{\lambda_{m-1}}A_{m-2}\ldots P^{(l)}_{\lambda_l}\ldots A_2P^{(2)}_{\lambda_2}A_1P^{(1)}_{\lambda_1}x\\
&=&\sum_{\substack{\lambda_j\in\sigma_j\, (1\leq j\leq m)\\\prod_{i\in\alpha^{-1}(a)}\lambda_i=1\, (1\leq a\leq k)}}
P^{(m)}_{\lambda_m}A_{m-1}P^{(m-1)}_{\lambda_{m-1}}A_{m-2}\ldots A_1P^{(1)}_{\lambda_1}x.
\end{eqnarray*}

Since the eigenvectors of $T_1$ corresponding to unimodular
eigenvalues span $X$ and the entangled Ces\`aro means are uniformly
bounded, we obtain the convergence on 
$X$ to the required limit.
\end{proof}

It is interesting that in Theorem $\ref{AP}$ it is not important
whether all operators $T_j$ ($1\leq j\leq m-1$) are different or equal, and the same for
the operators $A_j$, as the following shows. 
We shall need this fact in the proof of Theorem \ref{thm-general-case} below. 

\begin{A}\label{cor:several-op-rel-comp}
Suppose we know that Theorem $\ref{AP}$ and the form of the limit
given by $(\ref{limitEq})$ hold under additional assumption $T_1=\ldots=T_{m-1}$ and $A_1=\ldots=A_{m-1}$. Then it also holds in full generality.
\end{A}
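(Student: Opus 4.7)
My plan is a direct-sum reduction: I would lift the general problem on $X$ to an entangled mean on $Z := X^m$ whose data already satisfy the special hypothesis $T_1 = \cdots = T_{m-1}$ and $A_1 = \cdots = A_{m-1}$. Concretely, on $Z$ I would take the diagonal operator $S := T_1 \oplus \cdots \oplus T_{m-1} \oplus I_X$ to play the role of the first $m-1$ operators, $\tilde T_m := I_X \oplus \cdots \oplus I_X \oplus T_m$ to play the role of $T_m$, and the single shift
\[
B(x_1,\ldots,x_m) := (0,\, A_1 x_1,\, A_2 x_2,\,\ldots,\,A_{m-1}x_{m-1})
\]
to play all the $A_j$. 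As a direct sum of almost periodic operators, $S$ is almost periodic; $\tilde T_m$ inherits power boundedness and total ergodicity from $T_m$ and the identity; and $B$ is clearly bounded.

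The second step is the telescoping identity. With the canonical embedding $\iota_1:X\to Z$ into the first coordinate and the projection $\pi_m:Z\to X$ onto the last, a straightforward induction on the number of applied factors shows that $B$ shifts a vector from slot $j$ to slot $j+1$ while applying $A_j$, so for every $x\in X$ one has
\[
T_m^{n_{\alpha(m)}}A_{m-1}T_{m-1}^{n_{\alpha(m-1)}}\cdots A_1 T_1^{n_{\alpha(1)}}x \;=\; \pi_m\bigl(\tilde T_m^{n_{\alpha(m)}} B S^{n_{\alpha(m-1)}} B \cdots B S^{n_{\alpha(1)}}\bigr)\iota_1(x).
\]
Averaging over $n_1,\ldots,n_k$ and invoking the hypothesis on $Z$ with operators $(S,\ldots,S,\tilde T_m)$ and $(B,\ldots,B)$ yields strong convergence of the right-hand side, hence of the original entangled Ces\`aro means on $X$.

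Finally, to match the limit with $(\ref{limitEq})$, I would unpack the ergodic projections: the mean ergodic projection of $S$ at $\lambda\in\mb{T}$ decomposes as $P^{(1)}_\lambda\oplus\cdots\oplus P^{(m-1)}_\lambda\oplus \delta_{\lambda,1} I_X$, and analogously for $\tilde T_m$. When composed with $\iota_1$ on the right and $\pi_m$ on the left, only the slot-$j$ component of the $j$-th projection survives, and the auxiliary limit collapses term by term to $(\ref{limitEq})$ on $X$. The main piece of bookkeeping, rather than a genuine obstacle, is to check that the eigenvalue constraints $\prod_{i\in\alpha^{-1}(a)}\lambda_i=1$ and the index sets $\lambda_j\in\sigma_j$ transfer cleanly between the two systems; this works because the spurious eigenvalue $1$ coming from the inserted identities contributes zero to all nonzero matrix entries unless it coincides with $1\in\sigma_j$, which already appears in the original formula.
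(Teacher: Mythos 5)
Your proposal is correct and is essentially the paper's own argument: the same lift to $X^m$ with the diagonal operators $\mathrm{diag}(T_1,\ldots,T_{m-1},I)$ and $\mathrm{diag}(I,\ldots,I,T_m)$, the same single off-diagonal shift operator playing all the $A_j$, evaluation at $(x,0,\ldots,0)^T$, reading off the last coordinate, and the same identification of the mean ergodic projections as block-diagonal with $\mathds{1}_{\{1\}}(\lambda)I$ in the identity slots (using that $P^{(j)}_\lambda=0$ for $\lambda\notin\sigma_j$ to recover the index sets). No substantive difference.
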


\begin{proof}
Consider the space $\mc{X}:=X^m$ with the diagonal operators
\begin{eqnarray*}
  \mc{T}&:=&\mathrm{diag}(T_1,T_2,\ldots,T_{m-1},I)\in\mc{L}(\mc{X}), \\
  \mc{S}&:=&\mathrm{diag}(I,I,\ldots,I,T_m)\in\mc{L}(\mc{X})
\end{eqnarray*}
and the off-diagonal operator
$$
 \mc{A}:=((\delta_{a-1,b}A_a))_{a,b}\in\mc{L}(\mc{X}),
$$ 
where $\delta$ is the Kronecker symbol. Since all the $T_j$'s ($1\leq
j\leq m-1$) are almost periodic, so is $\mc{T}$, and $\mc{S}$ is clearly totally ergodic on $\mc{X}$. We can hence apply
the weaker statement of Theorem \ref{AP} to the operators $\mc{T}$, $\mc{S}$ and $\mc{A}$. For the vector $(x,0,\ldots,0)^T\in\mc{X}$ for some $x\in X$, this yields the existence of
\begin{equation}\label{eqnPowerSpace}
\lim_{N\to\infty}\frac{1}{N^k}\sum_{n_1,\ldots,n_k=1}^{N}\mc{S}^{n_{\alpha(m)}}\mc{A}\mc{T}^{n_{\alpha(m-1)}}\mc{A}\ldots \mc{A}\mc{T}^{n_{\alpha(1)}}(x,0,\ldots,0)^T.
\end{equation}
However, each of the summands has the form
\[
(0,\ldots,0,T_m^{n_{\alpha(m)}}A_{m-1}T_{m-1}^{n_{\alpha(m-1)}}A_{m-2}\ldots A_1T_1^{n_{\alpha(1)}}x)^T,
\]
and hence the convergence of $(\ref{eqnPowerSpace})$ in the last coordinate implies the required convergence of $(\ref{entEq})$.
Concerning the explicit form of the limit in question, the expression $(\ref{limitEq})$ yields that it is the last coordinate of

\begin{equation}\label{eqnPowerSpace2}
\sum_{\substack{\lambda_j\in\sigma\, (1\leq j\leq m)\\\prod_{i\in\alpha^{-1}(a)}\lambda_i=1\, (1\leq a\leq k)}}
\mc{P}^{\mc{S}}_{\lambda_m}\mc{A}\mc{P}^{\mc{T}}_{\lambda_{m-1}}\ldots \mc{A}\mc{P}^{\mc{T}}_{\lambda_1}(x,0,\ldots,0)^T,
\end{equation}
where $\mc{P}^{\mc{S}}_\lambda$ and $\mc{P}^{\mc{T}}_\lambda$ are the mean ergodic projections onto the eigenspace corresponding to $\lambda$ of $\mc{S}$ and $\mc{T}$, respectively,. Due to the diagonality of $\mc{T}$ and $\mc{S}$, each of the components in $\mc{X}=X^m$ is $\mc{T}$- and $\mc{S}$-invariant, hence we in fact have 
$\mc{P}^{\mc{T}}_\lambda=\mathrm{diag}(P^{(1)}_\lambda,\ldots,P^{(m-1)}_\lambda,\mathds{1}_{\{1\}}(\lambda)I)$ and
$\mc{P}^{\mc{S}}_\lambda=\mathrm{diag}(\mathds{1}_{\{1\}}(\lambda)I,\ldots,\mathds{1}_{\{1\}}(\lambda)I,P^{(m)}_\lambda)$
 where $\mathds{1}_M$ denotes the charasteristic function of a set $M$.
The summands in $(\ref{eqnPowerSpace2})$ thus have the form
\[
(0,\ldots,0,P^{(m)}_{\lambda_m}A_{m-1}P^{(m-1)}_{\lambda_{m-1}}A_{m-2}\ldots A_1P^{(1)}_{\lambda_1}x)^T.
\]
Taking into consideration that the mean ergodic projections satisfy $\mr{Ran}P^{(j)}_{\lambda}=\left\{y\in X|\,T_jy=\lambda y\right\}$, and hence $P^{(j)}_\lambda=0$ whenever
$\lambda\not\in\sigma_j$, the limit reduces to the required form.

\end{proof}

%%%%%%%%%%%
%
%
%                 UJ SZEKCIO
%
%
%%%%%%%%%%%

\section{The general case}\label{section:general-case}

We now extend the results from the previous section to the
case when only the orbits along the family of operators 
$\left\{A_iT_i^n\right\}_{n\in\mb{N}^+}$ are relatively compact
for all $1\leq i\leq m-1$, i.e., assuming (A2). 

The key for our considerations will be the following extended version
of a classical decomposition theorem, see e.g.~Krengel \cite[Section
2.2.4]{krengel:1985} or \cite[Theorem II.4.8]{eisner-book}. 

\begin{T}\label{J-Gl-dL}\emph{(Jacobs--Glicksberg--de Leeuw decomposition)}
Let $X$ be a Banach space and let $T\in\mathcal{L}(X)$ have relatively weakly compact orbits. Then $X=X_r\oplus X_s$, where
\begin{eqnarray*}
X_r &:=& \overline{\emph{lin}}\left\{x \,\in\, X|\ Tx= \lambda  x \text{ for some } \lambda \in \mb{T} \right\}, \\
X_s &:=& \left\{ x \in X|\, \lim_{j\to\infty}T^{n_j}x=0 \text{ weakly
  for some sequence } \{n_j\}_{j=1}^\infty \text{ with density }1 \right\},
\end{eqnarray*}
with both subspaces being invariant under $T$.
In addition, if $X'$ is separable, then there exists a sequence $\{n_j\}_{j=1}^\infty$ with
density $1$ such that $\lim_{j\to\infty}T^{n_j}|_{X_s}=0$ weakly.
\end{T}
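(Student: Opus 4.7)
The plan is to build the decomposition from mean ergodicity of each rotated operator $\lambda T$, $\lambda\in\mb{T}$, combined with the Jacobs--de~Leeuw--Glicksberg structure of the weak-operator closure of the semigroup generated by $T$, and then to extract the density-$1$ statement via the Koopman--von~Neumann lemma. Since relative weak compactness of orbits is invariant under multiplication by a unimodular scalar, each $\lambda T$ is mean ergodic, and the bounded projections $P_\lambda:=\mathrm{SOT}\text{-}\lim_N\frac{1}{N}\sum_{n=1}^N\lambda^{-n}T^n$ onto $\ker(\lambda-T)$ exist for every $\lambda\in\mb{T}$. One expects $X_r=\overline{\mathrm{lin}}\bigcup_{\lambda\in\mb{T}}\mathrm{Ran}(P_\lambda)$, matching the description in the theorem, and $X_s=\bigcap_{\lambda\in\mb{T}}\ker P_\lambda$; both are $T$-invariant, and the task is to show this is a topological direct sum with the claimed density-$1$ description.

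To obtain $X=X_r\oplus X_s$, I would invoke the Jacobs--de~Leeuw--Glicksberg structure theorem: the closure $\mc{S}:=\overline{\{T^n:\,n\in\mb{N}\}}^{\mathrm{WOT}}$ is a compact abelian semitopological semigroup, WOT-compactness following from Tychonoff applied to the product of the relatively weakly compact orbits and the semigroup structure from separate WOT-continuity of multiplication on bounded sets. Ellis's theorem then yields a smallest (two-sided) ideal in $\mc{S}$ that is itself a compact topological group $K$. Its identity $P$ is a bounded idempotent in $\mc{L}(X)$ commuting with $T$, giving the algebraic direct sum $X=\mathrm{Ran}(P)\oplus\ker P$. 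A short verification using the group structure on $K$ identifies $\mathrm{Ran}(P)$ with $X_r$ and $\ker P$ with $X_s$ above.

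For the density-$1$ characterisation of $X_s$, fix $x\in X_s$ and $x'\in X'$ and set $a_n:=\langle T^n x, x'\rangle$. Then $(a_n)$ is a bounded sequence with $\frac{1}{N}\sum_{n=1}^N\lambda^{-n}a_n\to 0$ for every $\lambda\in\mb{T}$. A Wiener-type spectral argument then gives $\frac{1}{N}\sum_{n=1}^N|a_n|^2\to 0$, hence by Cauchy--Schwarz $\frac{1}{N}\sum_{n=1}^N|a_n|\to 0$, and the Koopman--von~Neumann lemma produces a density-$1$ set along which $a_n\to 0$. A diagonal argument over a countable subset of $X'$ that is norming for the weakly compact orbit closure of $x$ then produces a single density-$1$ sequence $\{n_j\}$ (depending on $x$) with $T^{n_j}x\to 0$ weakly.

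Finally, when $X'$ is separable, weakly compact subsets of $X$ are weakly metrizable; metrizing the weakly compact hull of $\bigcup_{x\in D}\{T^nx:\,n\in\mb{N}\}$ for a countable norm-dense $D\subset X_s$ and running one more diagonal Koopman--von~Neumann argument (via dominated convergence across the defining metric) yields a single density-$1$ sequence working simultaneously for all $x\in D$, and hence for all $x\in X_s$ by uniform boundedness of $\{T^n\}$. The main technical obstacle throughout is the assembly of these simultaneous density-$1$ sequences; metrizability of the weak topology on weakly compact sets, available precisely under the separable-dual hypothesis, is the essential ingredient for the final uniform statement.
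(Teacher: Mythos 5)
First, a point of reference: the paper does not prove Theorem \ref{J-Gl-dL} at all --- it is quoted from Krengel \cite[Section 2.2.4]{krengel:1985} and \cite[Theorem II.4.8]{eisner-book} --- so there is no internal proof to compare against. Your overall architecture (weak-operator compactness of $\overline{\{T^n:\,n\in\mb{N}\}}$ via Tychonoff, the minimal ideal of the resulting compact semitopological semigroup and its unit $P$ giving $X=\mathrm{Ran}(P)\oplus\ker P$, Koopman--von Neumann for the density-one statements, and metrizability of the weak topology on weakly compact sets under a separable dual for the uniform version) is indeed the standard route taken in those references.

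There is, however, a genuine gap, and it sits at the technical heart of the theorem. You pass from ``$\frac{1}{N}\sum_{n=1}^N\lambda^{-n}a_n\to0$ for every $\lambda\in\mb{T}$'' to ``$\frac{1}{N}\sum_{n=1}^N|a_n|^2\to0$'' by ``a Wiener-type spectral argument''. For a general bounded sequence this implication is false: for $a_n=e^{2\pi i\sqrt{n}}$ all rotated Ces\`aro means tend to $0$ (by equidistribution of $\sqrt{n}$ modulo $1$ for $\lambda=1$, and by Abel summation for $\lambda\neq1$), yet $|a_n|\equiv1$. Wiener's lemma applies to Fourier coefficients of a complex measure on $\mb{T}$; on a Hilbert space with $T$ unitary the spectral theorem supplies such a measure with $\hat\mu(n)=\langle T^nx,x'\rangle$, but on a general Banach space there is no spectral measure to invoke. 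The correct argument must use the relative weak compactness of the orbit at exactly this point: the sequence $n\mapsto\langle T^nx,x'\rangle$ is weakly almost periodic in the sense of Eberlein (via Grothendieck's double limit criterion), and for such sequences the Glicksberg--de Leeuw splitting of $\mathrm{WAP}$ into its almost periodic and flight parts, with the flight part characterised by vanishing mean of the modulus, yields $\frac{1}{N}\sum_{n=1}^N|\langle T^nx,x'\rangle|\to0$; alternatively one derives this directly from $Px=0$ using the minimal-ideal structure. Either way, this is the one step where the hypothesis is genuinely consumed, and it cannot be dispatched by a generic Wiener-type appeal. A smaller elision: identifying $\mathrm{Ran}(P)$ with the closed linear span of the unimodular eigenvectors requires a Peter--Weyl-type argument for the compact group $K$ acting on $\mathrm{Ran}(P)$, which is more than ``a short verification''. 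The remainder of your outline, including the diagonalisation over a countable norming set and the use of weak metrizability under the separable-dual hypothesis, is sound.
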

\noindent Recall that the density of a set $M\subset \mb{N}$ is
defined by 
$$
  d(M)=\lim_{n\to\infty} \frac{|M\cap\{1,\ldots,n\}|}{n}\leq 1,
$$
whenever the above limit exists. 

We further need the following well-known lemmas. 

\begin{Le}\label{lemma:K-vN}\emph{(Koopman--von Neumann)}
For a bounded sequence $\{a_n\}_{n=1}^\infty\subset[0,\infty)$ the following assertions are equivalent.
\begin{enumerate}[(a)]
\item $\displaystyle\lim_{n\to\infty}\frac{1}{n}\sum_{k=1}^n a_k=0$.
\item There exists a subsequence $\{n_j\}_{j=1}^\infty$ of $\mb{N}$ with density $1$ such that $\lim_{j\to\infty}a_{n_j}=0$.
\end{enumerate}
\end{Le}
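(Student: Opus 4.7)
The plan is to prove the two implications of the Koopman--von Neumann lemma separately, with $(b)\Rightarrow(a)$ being essentially a direct estimate and $(a)\Rightarrow(b)$ requiring a diagonal/exhaustion argument.

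For $(b)\Rightarrow(a)$: Let $M$ be an upper bound for the sequence $\{a_n\}$ and let $J=\{n_j\}_{j=1}^\infty$ be the density-$1$ subsequence on which $a_{n_j}\to 0$. Given $\varepsilon>0$, choose $j_0$ so that $a_{n_j}<\varepsilon$ for all $j\geq j_0$. Splitting the Ces\`aro sum into indices in $J$ and indices in $\mb{N}\setminus J$, the first part is bounded by a finite error plus $\varepsilon$, and the second part is bounded by $M\cdot|\mb{N}\setminus J\cap\{1,\ldots,n\}|/n$, which tends to $0$ since $J$ has density $1$. Letting $n\to\infty$ and then $\varepsilon\to 0$ gives (a).

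For $(a)\Rightarrow(b)$: For each integer $k\geq 1$ define the ``bad'' sets
\[
E_k:=\left\{n\in\mb{N}\,:\,a_n\geq 1/k\right\}.
\]
By a Markov-type estimate, $|E_k\cap\{1,\ldots,n\}|\leq k\sum_{i=1}^n a_i$, so dividing by $n$ and using (a) yields $d(E_k)=0$ for every $k$. The goal is to patch these together into a single density-zero exceptional set. The main obstacle, and the one place that needs care, is that $\bigcup_k E_k$ need not have density zero; a diagonal choice is needed.

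Choose inductively an increasing sequence $N_1<N_2<\cdots$ of integers such that for every $n\geq N_k$,
\[
\frac{|E_k\cap\{1,\ldots,n\}|}{n}<\frac{1}{k}.
\]
Define
\[
E:=\bigcup_{k=1}^\infty\bigl(E_k\cap[N_k,N_{k+1})\bigr).
\]
For $n\in[N_k,N_{k+1})$, the set $E\cap\{1,\ldots,n\}$ is contained in $E_k\cap\{1,\ldots,n\}$ together with the finite set $E\cap\{1,\ldots,N_k-1\}$ (since the earlier layers were built from $E_1,\ldots,E_{k-1}\supseteq E_k$ on initial intervals). Hence
\[
\frac{|E\cap\{1,\ldots,n\}|}{n}\leq\frac{|E_k\cap\{1,\ldots,n\}|}{n}+\frac{N_k}{n}<\frac{1}{k}+\frac{N_k}{n},
\]
which tends to $0$ as $n\to\infty$ through $[N_k,N_{k+1})$, and then $k\to\infty$. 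Thus $d(E)=0$, so its complement has density $1$; enumerate this complement as $\{n_j\}_{j=1}^\infty$. Finally, for $n_j\geq N_k$ with $n_j\notin E$, we have $n_j\notin E_k$ by construction, so $a_{n_j}<1/k$. This proves $a_{n_j}\to 0$, completing the implication.
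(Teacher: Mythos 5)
The paper itself does not prove this lemma---it only points to Petersen \cite[p.~65]{petersen:1983}---so you are supplying a proof where the authors give a citation. Your overall strategy is the standard one (a direct splitting estimate for $(b)\Rightarrow(a)$, and for $(a)\Rightarrow(b)$ a Markov bound on the level sets $E_k$ followed by a diagonal patching), and the $(b)\Rightarrow(a)$ half is correct as written.

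There is, however, a genuine flaw in your verification that $d(E)=0$. First, the inclusion in your parenthetical is backwards: since $a_n\geq 1/k$ is a \emph{weaker} condition for larger $k$, one has $E_1\subseteq E_2\subseteq\cdots$, not $E_{k-1}\supseteq E_k$. Second, and more importantly, the bound you derive, $\frac{1}{k}+\frac{N_k}{n}$ for $n\in[N_k,N_{k+1})$, does not tend to $0$: for fixed $k$ the variable $n$ ranges over a \emph{bounded} interval, and at $n=N_k$ the second term equals $1$, so ``tends to $0$ as $n\to\infty$ through $[N_k,N_{k+1})$'' is not a meaningful limit and the estimate as stated does not give $d(E)=0$. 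The construction itself is fine and the repair is immediate from the monotonicity you mis-stated: for $n\in[N_k,N_{k+1})$ every layer $E_j\cap[N_j,N_{j+1})$ with $j\leq k$ is contained in $E_k$, so in fact
\[
E\cap\{1,\ldots,n\}\subseteq E_k\cap\{1,\ldots,n\},
\qquad\text{hence}\qquad
\frac{|E\cap\{1,\ldots,n\}|}{n}<\frac{1}{k}
\quad\text{for all } n\in[N_k,N_{k+1}),
\]
with no $N_k/n$ error term at all; since the bound $1/k$ holds on all of $[N_k,\infty)$ (as $n\geq N_k$ lies in some $[N_l,N_{l+1})$ with $l\geq k$), this yields $d(E)=0$. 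Note that your final step (``$n_j\notin E$ and $n_j\geq N_k$ imply $n_j\notin E_k$'') already uses exactly this monotonicity $E_k\subseteq E_l$ for $l\geq k$, so the correct inclusion is needed anyway; with that one correction the proof is complete.
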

\noindent We refer to e.g.~Petersen \cite[p.~65]{petersen:1983} for
the proof. 

\begin{Le}\label{lemma:rel-comp}
Let $X$ be a Banach space and let $\{T_n\}_{n=1}^\infty,\{S_n\}_{n=1}^\infty
\subset \mathcal{L}(X)$. Then the following assertion holds. 
If both $\{T_n x:\,n\in\mb{N}\}$ and  $\{S_n x:\,n\in\mb{N}\}$
  are relatively compact in $X$ for every $x\in X$, then so is
  $\{T_nS_n x:\,n\in\mb{N}\}$ for every $x\in X$.
\end{Le}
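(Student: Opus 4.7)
The plan is to exploit two standard facts: first, that pointwise relative compactness forces uniform boundedness via the uniform boundedness principle, and second, that in a Banach space relative compactness coincides with total boundedness. The difficulty, and the only reason this is not immediate, is that one cannot simply apply the hypothesis to the vector $y = S_n x$, since in $T_n S_n x$ the index $n$ on $T_n$ is tied to the index on $S_n$ rather than ranging freely.

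First I would fix $x \in X$ and observe that $\{S_n x\}$ being relatively compact makes it in particular bounded, and likewise for $\{T_n y\}$ at each $y$; by the uniform boundedness principle applied twice (with the roles of the bounded sets being different orbits), one gets $M := \sup_n \|T_n\| < \infty$ and $\sup_n \|S_n\| < \infty$. Next, set $K := \overline{\{S_n x : n \in \mb{N}\}}$, which is compact, and for a given $\varepsilon > 0$ choose a finite $\varepsilon$-net $y_1, \ldots, y_r \in K$. For each $n$ select an index $i(n)$ with $\|S_n x - y_{i(n)}\| < \varepsilon$.

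The key step is then the decomposition
\[
T_n S_n x = T_n y_{i(n)} + T_n(S_n x - y_{i(n)}).
\]
The second term has norm at most $M\varepsilon$, while the first lies in $\bigcup_{i=1}^r \{T_n y_i : n \in \mb{N}\}$, a finite union of relatively compact sets, which is again relatively compact and therefore totally bounded. Covering it by finitely many balls of radius $\varepsilon$, we see that $\{T_n S_n x : n \in \mb{N}\}$ can be covered by finitely many balls of radius $(M+1)\varepsilon$. Since $\varepsilon$ was arbitrary, the set is totally bounded, hence relatively compact in the Banach space $X$.

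The only real subtlety, as noted above, is the coupling of the indices; the approximation step is precisely what decouples them, by replacing the moving target $S_n x$ with one of finitely many fixed vectors up to an error controlled uniformly in $n$ by $\sup_n \|T_n\|$. Everything else reduces to the standard fact that relative compactness in Banach spaces is total boundedness, so no separability, reflexivity, or metrizability assumption on $X$ is needed.
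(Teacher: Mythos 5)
Your proof is correct, but it takes a different route from the paper. The paper argues by sequential compactness: given a sequence $\{n_j\}$, it first extracts a subsequence along which $S_{m_j}x\to y$, then a further subsequence along which $T_{m_j}y\to z$, and concludes via the estimate $\|T_{m_j}S_{m_j}x-z\|\leq M\|S_{m_j}x-y\|+\|T_{m_j}y-z\|$, with $M=\sup_n\|T_n\|$ obtained from the uniform boundedness principle exactly as in your argument. You instead work with total boundedness: a finite $\varepsilon$-net $y_1,\ldots,y_r$ for $\overline{\{S_nx\}}$ decouples the two indices, since $T_nS_nx$ is within $M\varepsilon$ of an element of the relatively compact set $\bigcup_{i=1}^r\{T_ny_i:\,n\in\mb{N}\}$, giving a finite cover by balls of radius $(M+1)\varepsilon$. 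Both arguments hinge on the same two ingredients (the uniform bound $M$ and the replacement of the moving target $S_nx$ by a fixed limit point or net point), so they are close in spirit; the paper's version is slightly shorter because a single convergent subsequence does the work of your finite net, while yours makes the quantitative total-boundedness structure explicit and avoids any appeal to sequential characterisations of compactness. In a Banach space the two notions coincide, so neither approach is more general here.
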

\begin{proof}
 Since compact sets are bounded and by the uniform boundedness principle, there exists $M>0$ such that
$\|T_n\|\leq M$ and $\|S_n\|\leq M$ holds for every $n\in\mb{N}^+$. Take now $x\in X$ and a sequence $\{n_j\}\subset \mb{N}$. Then there exists a
subsequence $\{m_j\}$ of $\{n_j\}$ such that $\lim_{j\to\infty} S_{m_j}x=y$ for
some $y\in X$. Furthermore, there exists a subsequence of  $\{m_j\}$
which we again denote by  $\{m_j\}$ such that $\lim_{j\to\infty}
T_{m_j}y=z$ for some $z\in X$. This yields  
$$
\|T_{m_j}S_{m_j}x-z\|\leq M\|S_{m_j}x-y\|+\|T_{m_j}y-z\|\to 0 \quad
\text{as } j\to \infty,
$$
proving relative compactness of $\{T_nS_n x:\,n\in\mb{N}\}$.
\end{proof}

The following is the main result of the paper.

\begin{T}\label{thm-general-case}
Let $X$ be a Banach space and $\alpha:\{1,\ldots, m\}\to\{1,\ldots
k\}$ be surjective for some $k\leq m$. Let further $T_1,\ldots, T_m, A_1,\ldots,
A_{m-1}\in \mathcal{L}(X)$ satisfy assumptions (A1) and (A2).  
Then the entangled ergodic averages 
\begin{equation*}
\frac{1}{N^k}\sum_{n_1,\ldots,n_k=1}^{N}T_m^{n_{\alpha(m)}}A_{m-1}T_{m-1}^{n_{\alpha(m-1)}}A_{m-2}\ldots A_{1}T_1^{n_{\alpha(1)}}
\end{equation*}
converge strongly, and their limit is given by
$$
\sum_{\substack{\lambda_j\in\sigma_j\, (1\leq j\leq m)\\\prod_{i\in\alpha^{-1}(a)}\lambda_i=1\, (1\leq a\leq k)}}
P^{(m)}_{\lambda_m}A_{m-1}P^{(m-1)}_{\lambda_{m-1}}A_{m-2}\ldots A_1P^{(1)}_{\lambda_1},
$$ 
where $\sigma_j=P_\sigma(T_j)\cap \mb{T}$ and $P^{(j)}_{\lambda_j}$ is
the projection onto the eigenspace of $T_j$ corresponding to $\lambda_j$, i.e., the mean
ergodic projection of the operator $\overline{\lambda_j}T_j$.
\end{T}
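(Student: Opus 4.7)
The plan is to prove Theorem~\ref{thm-general-case} by induction on $m$, combining the Jacobs--Glicksberg--de Leeuw decomposition (Theorem~\ref{J-Gl-dL}) applied to $T_1$ with the compactness assumption (A2), in order to reduce to the almost periodic situation already handled in Theorem~\ref{AP}. The base case $m = 1$ is just the mean ergodic theorem: total ergodicity of $T_1 = T_m$ forces $\frac{1}{N}\sum_{n=1}^N T_1^n \to P^{(1)}_1$, which is the right-hand side of the claimed formula when $k = m = 1$.

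For the inductive step, I would first apply Theorem~\ref{J-Gl-dL} to $T_1$ to obtain $X = X_r^{(1)} \oplus X_s^{(1)}$. The entangled means should vanish on the stable part: given $x \in X_s^{(1)}$, Theorem~\ref{J-Gl-dL} produces a density-$1$ sequence $(n_j)$ along which $T_1^{n_j} x \to 0$ weakly, hence $A_1 T_1^{n_j} x \to 0$ weakly. The decisive input from (A2) is that $\{A_1 T_1^n x\}_{n\in\mb{N}}$ is relatively norm-compact, so a weakly null subsequence lying inside a norm-compact set must converge in norm, giving $\|A_1 T_1^{n_j} x\| \to 0$. Lemma~\ref{lemma:K-vN} then yields $\frac{1}{N}\sum_{n=1}^N \|A_1 T_1^n x\| \to 0$. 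Grouping the sum in (\ref{entEq}) by the value $n := n_{\alpha(1)}$ and using a uniform bound $C$ on the remaining factor $T_m^{n_{\alpha(m)}} A_{m-1} \cdots A_2 T_2^{n_{\alpha(2)}}$ (available because every $T_j$ is power bounded) gives
\[
\left\| \frac{1}{N^k} \sum_{n_1, \ldots, n_k = 1}^N T_m^{n_{\alpha(m)}} A_{m-1} \cdots A_1 T_1^{n_{\alpha(1)}} x \right\|
\leq \frac{C}{N} \sum_{n=1}^N \|A_1 T_1^n x\| \;\longrightarrow\; 0.
\]

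For $x \in X_r^{(1)}$, uniform boundedness of the averages together with the density of unimodular eigenvectors in $X_r^{(1)}$ allows me to reduce to the case $T_1 x = \lambda x$ for some $\lambda \in \mb{T}$. From here the two cases in the proof of Theorem~\ref{AP} apply verbatim: if $\alpha^{-1}(\alpha(1)) = \{1\}$, the factor $\frac{1}{N}\sum_{n=1}^N \lambda^{n} \to \mathds{1}_{\{1\}}(\lambda)$ separates and produces, for $\lambda = 1$, an entangled average of size $(m-1, k-1)$ applied to $A_1 x$; otherwise, picking $l > 1$ with $\alpha(l) = \alpha(1)$, the scalar $\lambda^{n_{\alpha(1)}}$ is absorbed by replacing $T_l$ with $\lambda T_l$, yielding an $(m-1, k)$-problem applied to $A_1 x$. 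Both reduced tuples still satisfy (A1) and (A2) (unimodular rescaling preserves relatively weakly compact orbits, total ergodicity, and compactness of the orbits $A_j T_j^n$), and the restricted surjection has range of the appropriate size, so the inductive hypothesis applies.

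It then remains to verify that reassembling the contributions over all unimodular eigenvalues of $T_1$ reproduces the limit formula in the theorem; this is exactly the bookkeeping carried out at the end of the proof of Theorem~\ref{AP}, using $\mathrm{Ran}\, P^{(j)}_\mu = \ker(T_j - \mu I)$, the fact that the unimodular point spectrum of $\lambda T_l$ is $\lambda \sigma_l$, and tracking how the constraints $\prod_{i \in \alpha^{-1}(a)} \lambda_i = 1$ transform in each of the two cases. The principal obstacle is the stable-part step: without (A2) there is no way to promote the weak convergence provided by Theorem~\ref{J-Gl-dL} on $X_s^{(1)}$ into the norm convergence of $A_1 T_1^{n_j}$ that is required to feed Lemma~\ref{lemma:K-vN}, and the stable component would otherwise contribute an error that is not killed by the outer Cesàro average over $n_{\alpha(1)}$.
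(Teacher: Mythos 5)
Your argument is correct, and it reaches the same limit formula, but it is organized genuinely differently from the paper's proof. The paper does not run a fresh induction for the general case: it first reduces to a single pair $(T,A)$ via the direct-sum trick of Proposition \ref{cor:several-op-rel-comp}, then expands each summand according to the position $a$ of the innermost stable projection, and treats the resulting term $T_m^{n_{\alpha(m)}}A\cdots AT^{n_{\alpha(a)}}P_sAT^{n_{\alpha(a-1)}}P_rA\cdots AT^{n_{\alpha(1)}}P_rx$ directly, quoting Theorem \ref{AP} only for the all-$P_r$ term. Because the vector entering $AT^{n_{\alpha(a)}}P_s$ still depends on $n_{\alpha(1)},\ldots,n_{\alpha(a-1)}$, the paper must handle a whole relatively compact set $K$ of input vectors at once; this forces the extra steps of showing that $\mathrm{Orb}(K)$ is relatively weakly compact, that the smallest $T$-invariant subspace containing $K$ has separable dual, invoking the refined (separable-dual) form of Theorem \ref{J-Gl-dL} to get a single density-one sequence valid for all $y\in K$, and upgrading weak to norm convergence uniformly on $K$. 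Your induction on $m$ sidesteps all of that: by reducing to eigenvectors of $T_1$ before moving outward, the stable part at each level is hit by one fixed vector, so the basic vector-wise Jacobs--Glicksberg--de Leeuw statement, the weak-to-norm upgrade on a single relatively compact orbit $\{A_jT_j^nP_sy\}$, and Lemma \ref{lemma:K-vN} suffice. The price is that you must redo the eigenvalue bookkeeping of Theorem \ref{AP} inside the general induction and check that (A1) and (A2) persist under replacing $T_l$ by $\lambda T_l$ (they do, since $\mb{T}$ is compact), whereas the paper can keep the almost periodic case entirely modular. Both routes are sound; yours is arguably the more elementary, as it never needs the separability and metrizability arguments.
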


\begin{proof}
As in the proof of Corollary \ref{cor:several-op-rel-comp} we may assume $T_j=T$ and $A_j=A$
for some $T,A\in\mathcal{L}(X)$ and all $1\leq j\leq m-1$. For $x\in
X$, we have to show convergence of
\begin{equation}\label{eq:T-A}
\frac{1}{N^k}\sum_{n_1,\ldots,n_k=1}^{N}T_m^{n_{\alpha(m)}}AT^{n_{\alpha(m-1)}}A\ldots AT^{n_{\alpha(1)}}x.
\end{equation}
By Theorem \ref{J-Gl-dL}, the summands in (\ref{eq:T-A}) satisfy
\begin{eqnarray*}
T_m^{n_{\alpha(m)}}AT^{n_{\alpha(m-1)}}A&\ldots& AT^{n_{\alpha(1)}}x \\
&=&
\sum_{a=1}^{m-1} T_m^{n_{\alpha(m)}}A\ldots AT^{n_{\alpha(a)}}P_s A
T^{n_{\alpha(a-1)}}P_rA \ldots AT^{n_{\alpha(1)}}P_rx\\ 
&+&
T_m^{n_{\alpha(m)}}AT^{n_{\alpha(m-1)}}P_rA\ldots AT^{n_{\alpha(1)}}P_rx,
\end{eqnarray*}
where $P_r$ and $P_s$ are the projections onto $X_r$ and $X_s$ pertaining to $T$ from
Theorem \ref{J-Gl-dL}, respectively. By Theorem \ref{AP}, the averages
of the second summand above converge to the desired limit. It remains
to show that the averages of the
first summand converge to $0$, i.e., that for every $x\in X$ and
$1\leq a\leq m-1$ one has
\begin{equation}\label{eq:zero}
\lim_{N\to\infty} \frac{1}{N^k}\sum_{n_1,\ldots,n_k=1}^{N}
T_m^{n_{\alpha(m)}}AT^{n_{\alpha(m-1)}}A\ldots AT^{n_{\alpha(a)}}P_s A T^{n_{\alpha(a-1)}}P_rA \ldots
AT^{n_{\alpha(1)}}P_rx=0. 
\end{equation}

Consider
$$
  K:=\{A T^{n_{a-1}}P_rA \ldots AT^{n_1}P_rx|\ n_{a-1},\ldots,n_1\in\mb{N}\}
$$
which is relatively compact by assumption and Lemma
\ref{lemma:rel-comp}(a). We now show that the dual space of the smallest $T$-invariant subspace $Y$
containing $K$ is separable. Observe first that
$Y=\clin\{T^nx|\,n\in\mb{N},\, x\in K\}$. We first show that the set 
$\mathrm{Orb}(K):=\{T^nx|\,n\in\mb{N},\, x\in K\}$ is relatively weakly compact. Take a sequence $\{T^{n_j}x_j\}_{j=1}^\infty$ with $x_j\in K$ and $n_j\in\mb{N}$. Since $K$ is relatively compact, there exists a subsequence of $\{x_j\}$ (which we again denote by $\{x_j\}$) converging to some $z$. Moreover, since $T$ has relatively weakly compact orbits, there is a subsequence of $\{n_j\}$ (which we again denote by $\{n_j\}$) such that $\lim_{j\to\infty}T^{n_j}z=w$ weakly for some $w\in X$. So we have
$$
|\langle T^{n_j}x_j-w,y \rangle|\leq |\langle T^{n_j}x_j - T^{n_j}z,y\rangle| + |\langle T^{n_j}z-w,y\rangle| \to 0\quad \forall y\in X',
$$
i.e.,  $\lim_{j\to\infty}T^{n_j}x_j=w$ weakly, and therefore 
$\mathrm{Orb}(K)$ is relatively weakly
compact. Since $Y$ is separable, and the weak topology is metrisable on
weakly compact subsets of separable spaces (see e.g.~Dunford, Schwartz
\cite[Theorem V.6.3]{dunford/schwartz:1958}), the weak topology on $\mathrm{Orb}(K)$ is metrisable. So it is induced by countably many $\{y_n\}\subset Y'$, implying separability of $Y'$ by $Y=\clin\mathrm{Orb}(K)$.

Theorem \ref{J-Gl-dL} assures now the existence of a sequence
$\{n_j\}\subset \mb{N}$ with density $1$ such that $\lim_{j\to\infty} T^{n_j}P_sy=0$ weakly for every  $y\in K$.
This implies
\begin{equation}\label{eq:A-weak-conv}
\lim_{j\to\infty} AT^{n_j}P_sy=0 \text{ weakly} \quad \text{for every } y\in
K.
\end{equation}
We now show that 
\begin{equation}\label{eq:A-strong-conv}
   \lim_{j\to\infty} \|A T^{n_j}P_sy\|=0 \quad \text{uniformly in } y\in K.
\end{equation}
Indeed, assume that for some $y\in K$, $AT^{n_j}P_s y$ does not
converge strongly to zero. Then there exist $\delta>0$ and a
subsequence $\{m_j\}$ of $\{n_j\}$ such that $\|AT^{m_j}P_s y\|\geq
\delta$ for every $j$. By relative compactness of $\{A T^{n}P_sy:\,
n\in \mb{N} \}$, the sequence $\{AT^{m_j}P_s y\}_{j=1}^\infty$ has a
strong accumulation point which by (\ref{eq:A-weak-conv}) must be zero, a
contradiction. Thus, $\lim_{j\to\infty} \|A T^{n_j}P_sy\|=0$ for every
$y\in K$ and therefore uniform in $y\in K$, since strong convergence in $\mc{L}(X)$ implies uniform strong convergence on compact sets. 
Since the sequence
$\{n_j\}$ has density $1$, the equation (\ref{eq:zero}) follows from
(\ref{eq:A-strong-conv}) and 
Lemma \ref{lemma:K-vN}.
\end{proof}

%%%%%%%%%%%
%
%
%                 UJ SZEKCIO
%
%
%%%%%%%%%%%

\section{Connection to convergence of multiple ergodic averages}

In this section we discuss connection between the considered above
entangled ergodic theorems and the important topic as
multiple ergodic averages. The latter is concerned with
non-commutative dynamical systems and was studied by Niculescu, Str\"oh and Zsid\'o \cite{niculescu/stroh/zsido}, Duvenhage \cite{duvenhage},
Beyers, Duvenhage and Str\"oh \cite{beyers/duvenhage/stroh}, Fidaleo \cite{fidaleo:2009}, and Austin, Eisner, Tao \cite{TTT}. 

We first introduce what we mean by a non-commutative dynamical system
and corresponding multiple ergodic averages. 

\begin{D}
A von Neumann (or non-commutative) dynamical system is a triple
$(\M, \tau, \a)$, where $\M$ is a von Neumann algebra,
$\tau:\M\to \mb{C}$ is a faithful normal trace, and    
$\a:\M\to \M$ is a $\tau$-preserving $*$-automorphism.
We say for $k\in\mb{N}$ that the multiple ergodic averages 
\begin{equation}\label{eq:mult-erg-ave}
\frac{1}{N}\sum_{n=1}^N \a^n(a_1)\a^{2n}(a_2) \ldots \a^{kn}(a_k)
\end{equation}
converge \emph{strongly} if they converge in the 
$\tau$-norm
defined by $\|a\|_\tau:=\sqrt{\tau(aa^*)}$. The averages in
(\ref{eq:mult-erg-ave}) are called \emph{weakly} convergent if 
$$
  \frac{1}{N}\sum_{n=1}^N \tau(a_0\a^n(a_1)\a^{2n}(a_2) \cdots \a^{kn}(a_k))
$$
converges as $N\to\infty$ for every $a_0\in \M$. 
\end{D}

We recall that by the Gel'fand--Neumark--Segal theory, $\M$ can be
identified with a dense subspace of a Hilbert space, where the Hilbert
space can be obtained as the completion of $\M$ with respect to the
$\tau$-norm. Thus, identifying elements of $\M$ with elements in $H$
and by the standard density argument, strong convergence of the multiple ergodic averages
(\ref{eq:mult-erg-ave}) corresponds to norm convergence in $H$ and
weak convergence of (\ref{eq:mult-erg-ave}) corresponds
to weak convergence in $H$. 

Recall further that for the automorphism
$\a$ there exists a unitary operator $u\in\mc{L}(H)$ such that
$\a(a)=uau^{-1}$, see e.g. \cite[Prop. 4.5.3]{kadison/ringrose}. (Note that $u$ does not
necessarily belong to $\M$.) Thus, averages (\ref{eq:mult-erg-ave}) take the form 
\begin{equation}\label{eq:ent-ave-for-mult}
\frac{1}{N}\sum_{n=1}^N u^na_1u^na_2\cdots u^n a_k u^{-kn},
\end{equation}
i.e., are a special case of entangled ergodic averages for the
constant partition $\alpha(j)=1$ for every $j\in{1,\ldots,k}$ and the
operators $u,\ldots, u, u^{-k}$. 

It is well-known that strong (weak) topology and strong (weak)
operator topology on $\M$ coincide. %\textbf{(TO DO: Sakai???)}
Therefore, there is a direct correspondence between strong (weak) convergence of
multiple ergodic averages (\ref{eq:mult-erg-ave}) and strong (weak)
\emph{operator} convergence of the entangled ergodic averages
(\ref{eq:ent-ave-for-mult}), cf. also Fidaleo \cite{fidaleo:2009}. 
%For the reader's convenience, we include the proof. 

\begin{A}
Let $(\M, \tau, \a)$ be a von Neumann dynamical system and $H$ and $u$ as
above. Let further $a_1,\ldots,a_k\in \M$. Then the multiple ergodic averages
(\ref{eq:mult-erg-ave}) converge strongly (weakly) if and only if the
entangled averages (\ref{eq:ent-ave-for-mult}) converge  
in the strong (weak) operator topology. 
\end{A}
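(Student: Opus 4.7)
The plan is to identify both averages as the same operator on the GNS Hilbert space $H$ and reduce the statement to a routine comparison between the $\tau$-norm on $\M$ and the strong/weak operator topologies inherited from $\mc{L}(H)$ under the GNS embedding $a\mapsto L_a$ (left multiplication).

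First I would carry out the telescoping already noted in the excerpt: from $\a^{jn}(a_j)=u^{jn}a_j u^{-jn}$ the adjacent factors $u^{-jn}u^{(j+1)n}$ collapse to $u^n$, so that writing $S_N$ for the multiple ergodic average in $\M$ and $M_N$ for the entangled operator average in $\mc{L}(H)$, the two agree as bounded operators on $H$, with $S_N$ acting by left multiplication. Since $\a$ is a $*$-automorphism it is an isometry of $\M$, giving $\|M_N\|_{\mc{L}(H)}=\|S_N\|_{\M}\leq\prod_{j=1}^k\|a_j\|_{\M}$, a bound uniform in $N$. Let $\Omega\in H$ be the cyclic vector representing the unit of $\M$, so that $\|a\|_\tau=\|a\Omega\|_H$ and $\tau(b^*a)=\langle a\Omega,b\Omega\rangle$. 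Then $\tau$-norm convergence of $S_N$ is by definition norm convergence of $M_N\Omega$ in $H$, while weak convergence of $S_N$ means $\tau(a_0 S_N)=\langle M_N\Omega,a_0^*\Omega\rangle$ converges for every $a_0\in\M$; by density of $\M\Omega$ in $H$ and the uniform bound this is equivalent to weak convergence of $M_N\Omega$ in $H$. Hence the implications ``operator convergence implies $\tau$-convergence'' follow immediately by evaluating at the single vector $\Omega$.

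For the converse I would promote convergence on $\Omega$ alone to convergence on all of $H$ by exploiting the commuting right action. The right multiplication $\rho(b):\xi\mapsto\xi b$ extends to a bounded operator on $H$ with $\|\rho(b)\|\leq\|b\|_{\M}$, and satisfies $M_N\rho(b)=\rho(b)M_N$ since left and right multiplications on $\M$ commute. Hence $M_N(b\Omega)=\rho(b)M_N\Omega$, so convergence of $M_N\Omega$ (strongly or weakly in $H$) propagates to every vector of the dense subspace $\M\Omega\subset H$, and the uniform bound then extends it to all of $H$ in the SOT case; the WOT case is analogous via $\langle M_N\eta,\zeta\rangle=\langle M_N\Omega,\rho(\eta)^*\zeta\rangle$ applied on a dense subset. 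Closedness of $\M$ in $\mc{L}(H)$ in the SOT and WOT ensures the limit operator lies in $\M$, and therefore corresponds to the expected $\tau$-limit of $S_N$. No ergodic-theoretic input is needed beyond the identity $M_N=S_N$ and the uniform norm bound—the only real work is careful bookkeeping of the GNS identifications and of the intertwining between $M_N$ and the right action of $\M$ on $H$, which is the main (though purely organisational) obstacle.
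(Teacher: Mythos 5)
Your proposal is correct and follows essentially the route the paper intends: identify the two averages via the GNS representation and the telescoping $\a^{jn}(a_j)=u^{jn}a_ju^{-jn}$, then use the coincidence of the $\tau$-topologies with the operator topologies on the (uniformly bounded) averages. The paper merely cites this coincidence as well known, whereas you supply its proof via the commuting right action $\rho(b)$ and the uniform norm bound; that is the standard argument and all the steps check out.
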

%T-begin 
%\begin{proof}
%The assertion for the weak (weak operator) convergence follows from
%the commutativity property of the trace and the standard density argument. The
%strong operator convergence of (\ref{eq:ent-ave-for-mult}) implies
%%strong convergence of (\ref{eq:mult-erg-ave}) by applying the
%entangled averages to the identity element in $\M$. To show the
%converse assume that a sequence $\{x_N\}_{N=1}^\infty\subset \M$
%converges strongly to zero. We have to show that $x_N$ converges to
%zero in the strong operator topology. For $x\in \M$ observe 
%$$
%  \tau(x_N x x^* x_N^*)=\tau(x x^* x_N^* x_N)\leq \sqrt{\tau(x x^*)\tau(x_N x_N^*)}
%$$
%by the Cauchy--Schwarz inequality for the trace. Therefore 
%$\lim_{N\to\infty}\|x_N x\|_\tau=0$
%for every $x\in X$, and strong operator convergence follows by the
%standard density argument. 
%\end{proof}

As was shown in \cite{TTT}, multiple ergodic averages
(\ref{eq:mult-erg-ave}) do not converge %for all $a_1,\ldots,a_k$ 
in general for $k\geq 3$. Theorem \ref{thm-general-case} shows now that for every von Neumann
dynamical system there is a class $\mc{K}$ depending on the system
such that the multiple ergodic averages converge strongly whenever
$a_1,\ldots,a_k\in \mc{K}$, and $\mc{K}$ can be chosen as the 
subspace of all elements $a\in \M$ such that $\{a u^n: n\in \mb{N}\}$ is relatively compact in
  $\mc{L}(H)$ for the strong operator topology.

\begin{R}\rm
Note that also more general sequences of powers than arithmetic sequences for the multiple ergodic averages can be treated by a slight modification in the last inductive step of the proof of Theorem \ref{AP}.
For instance, consider the averages
 \begin{equation}\label{eq:ent-ave-for-mult2}
\frac{1}{N^k}\sum_{n_1,\ldots,n_k=1}^{N} \beta^{n_{\alpha(1)}}(a_1)\beta^{n_{\alpha(1)+\alpha(2)}}(a_2)\cdots \beta^{\sum_{j=1}^m n_{\alpha(j)}}(a_m).
\end{equation}
These can then be rewritten as
\begin{equation}
 \frac{1}{N^k}\sum_{n_1,\ldots,n_k=1}^{N} u^{n_{\alpha(1)}}a_1 u^{\alpha(2)}a_2\cdots u^{n_{\alpha(m)}}a_m u^{-\sum_{j=1}^m n_{\alpha(j)}}.
\end{equation}
The last exponent being a sum of $\alpha(j)$'s rather than a single one does not matter on the almost weakly stable part, the compactness arguments of the proof of Theorem \ref{thm-general-case} still hold. On the almost periodic part however, suppose $x$ is an eigenvector to the unimodular eigenvalue $\lambda$. Then
\begin{eqnarray*}
&& \frac{1}{N^k}\sum_{n_1,\ldots,n_k=1}^{N} u^{n_{\alpha(1)}}a_1 u^{\alpha(2)}a_2\cdots u^{n_{\alpha(m)}}a_m u^{-\sum_{j=1}^m n_{\alpha(j)}}x\\&=&\frac{1}{N^k}\sum_{n_1,\ldots,n_k=1}^{N} u^{n_{\alpha(1)}}a_1 u^{\alpha(2)}a_2\cdots u^{n_{\alpha(m)}}a_m \lambda^{-\sum_{j=1}^m n_{\alpha(j)}}x,
\end{eqnarray*}
and the powers of the eigenvalue $\lambda$ have to be pulled forward to not only a single operator as in the inductive proof of Theorem \ref{AP}, but distributed amongst all of them to get back to the standard form:
\begin{eqnarray*}
&& \frac{1}{N^k}\sum_{n_1,\ldots,n_k=1}^{N} u^{n_{\alpha(1)}}a_1 u^{\alpha(2)}a_2\cdots u^{n_{\alpha(m)}}a_m \lambda^{-\sum_{j=1}^m n_{\alpha(j)}}x\\&=&
\frac{1}{N^k}\sum_{n_1,\ldots,n_k=1}^{N} (\lambda^{-1}u)^{n_{\alpha(1)}}a_1(\lambda^{-1}u)^{\alpha(2)}a_2\cdots (\lambda^{-1}u)^{n_{\alpha(m)}}a_m x.
\end{eqnarray*}
Hence the averages (\ref{eq:ent-ave-for-mult2}) converge strongly if $\{a_j u^n: n\in \mb{N}\}$ is relatively compact in
  $\mc{L}(H)$ for the strong operator topology for every $1\leq j\leq m$.
\end{R}

%%%%%%%%%%%
%
%
%                 UJ SZEKCIO
%
%
%%%%%%%%%%%

\section{Continuous case}\label{section:continuous-case}

In this section we treat the continuous time scale, 
where the operators $T_j$ and their powers are replaced by strongly
continuous ($C_0$-) semigroups $(T_j(t))_{t\geq 0}$. 
The study of the
continuous version for entangled ergodic averages seems to be
new. Some steps in the proofs are similar to the discrete case and will be skipped. 
For the general theory of strongly continuous semigroups we refer to e.g. Engel, Nagel
\cite{engel/nagel:2000}. 
For a semigroup $(T(t))_{t\geq 0}$ we often write $T(\cdot)$.

\begin{D}\label{def:alm-per-cont}
A $C_0$-semigroup of operators $(T(t))_{t\geq0}\subset\mc{L}(X)$
acting on a Banach space $X$ is called \emph{almost periodic} if it is bounded (i.e.~$\sup_{t\geq0}\|T(t)\|<\infty$) and satisifies 
\[
X=\overline{\mathrm{lin}}\left\{x\in X\big|\exists
  \varphi\in\mb{R}: T(t)x=e^{i\varphi t} x\,\forall\,t\geq0\right\}.
\]
\end{D}

\noindent Recall that by the spectral mapping theorem (see e.g.~\cite[Corollary IV.3.8]{engel/nagel:2000}), $T(t)x=e^{i\varphi
  t}x$ for every $t\geq 0$ if and only if $Bx=i\varphi x$ for the
generator $B$ of $T(\cdot)$.

\begin{T}\label{AP-cont}
Let $X$ be a Banach space, $T_1(\cdot),\ldots,T_{m-1}(\cdot)$ be
almost periodic $C_0$-semigroups on $X$, $T_m(\cdot)$ a 
bounded totally ergodic $C_0$-semigroup on $X$
and $A_1,\ldots,A_{m-1}\in\mc{L}(X)$. 
Then the entangled Ces\`aro means
\begin{equation}\label{entEq-cont}
\frac{1}{t^k}\int_{[0,t]^k}T_m(s_{\alpha(m)})A_{m-1}T_{m-1}(s_{\alpha(m-1)})A_{m-2}\ldots A_1T_1(s_{\alpha(1)})\,ds_1\,\ldots\,ds_k
\end{equation}
converge strongly as $t\to\infty$.
\end{T}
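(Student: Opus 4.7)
I would mimic the inductive strategy of Theorem~\ref{AP} on the continuous time scale. First observe that an almost periodic $C_0$-semigroup stays almost periodic after multiplication by a unimodular character: if $T_j(s)x=e^{i\psi s}x$, then $(e^{-i\varphi s}T_j(s))x=e^{i(\psi-\varphi)s}x$, so the span of unimodular-character eigenvectors is preserved. In particular $(e^{-i\varphi s}T_j(s))_{s\ge 0}$ is mean ergodic for every $\varphi\in\mb{R}$ and $1\le j\le m-1$; for $j=m$ the same holds by the total ergodicity hypothesis. One can therefore introduce the continuous mean ergodic projections
\[
P^{(j)}_\varphi:=\lim_{t\to\infty}\frac{1}{t}\int_0^t e^{-i\varphi s}T_j(s)\,ds,\qquad 1\le j\le m,\ \varphi\in\mb{R},
\]
which project onto $\ker(B_j-i\varphi I)$ for $B_j$ the generator of $T_j(\cdot)$, and in particular vanish unless $i\varphi$ is an eigenvalue of $B_j$.

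I then claim that (\ref{entEq-cont}) converges strongly to the formal sum
\[
\sum_{\substack{\varphi_j\in\sigma_j\,(1\le j\le m)\\ \sum_{i\in\alpha^{-1}(a)}\varphi_i=0\,(1\le a\le k)}}P^{(m)}_{\varphi_m}A_{m-1}P^{(m-1)}_{\varphi_{m-1}}A_{m-2}\cdots A_1P^{(1)}_{\varphi_1},
\]
interpreted as in Theorem~\ref{AP} as the strong limit of its partial sums over finite subsets of $\sigma_j:=\{\varphi\in\mb{R}:i\varphi\in P_\sigma(B_j)\}$. The proof runs by induction on $m$, the base $m=1$ being the mean ergodic theorem for $T_1(\cdot)$. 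Assume WLOG $\alpha(1)=1$. If $\alpha^{-1}(1)=\{1\}$, Fubini separates the $s_1$-integration:
\[
\left(\frac{1}{t^{k-1}}\int_{[0,t]^{k-1}}T_m(s_{\alpha(m)})A_{m-1}\cdots A_2T_2(s_{\alpha(2)})\,ds_2\cdots ds_k\right)A_1\left(\frac{1}{t}\int_0^t T_1(s_1)\,ds_1\right),
\]
the right factor converges strongly to $P^{(1)}_0$, the left by the induction hypothesis, and joint strong-operator continuity of multiplication on bounded sets closes this case.

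If instead some $l>1$ satisfies $\alpha(l)=1$, I fix an eigenvector $x$ of $T_1(\cdot)$ with $T_1(s)x=e^{i\varphi s}x$, substitute $T_1(s_1)x=e^{i\varphi s_1}x$, commute the resulting scalar $e^{i\varphi s_{\alpha(1)}}=e^{i\varphi s_{\alpha(l)}}$ through the intervening bounded operators, and absorb it into the $l$-th slot via $e^{i\varphi s_{\alpha(l)}}T_l(s_{\alpha(l)})=(e^{i\varphi\,\cdot}T_l)(s_{\alpha(l)})$. The twisted semigroup is still almost periodic by the observation above, so the induction hypothesis applies to the $m-1$ semigroups $T_2(\cdot),\ldots,e^{i\varphi\,\cdot}T_l(\cdot),\ldots,T_m(\cdot)$ evaluated at $A_1x$; the change of variable $\varphi_l\mapsto\varphi_l-\varphi$ together with $x=P^{(1)}_\varphi x$ identifies the resulting limit with the appropriate slice of the conjectured sum. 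Density in $X$ of the span of unimodular-character eigenvectors of $T_1(\cdot)$ combined with uniform boundedness of the averages (each $T_j(\cdot)$ is bounded) then extends convergence to all of $X$. The main obstacle is the one genuinely new ingredient over the discrete setting: verifying that character-twisted semigroups $(e^{i\varphi s}T_l(s))_{s\ge 0}$ retain almost periodicity and that the totally ergodic hypothesis on $T_m(\cdot)$ yields mean ergodic projections at every real frequency, so that the inductive bookkeeping of the constraints $\sum_{i\in\alpha^{-1}(a)}\varphi_i=0$ parallels the discrete case verbatim.
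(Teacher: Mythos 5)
Your proposal is correct and takes essentially the same route as the paper: the paper's own proof of Theorem~\ref{AP-cont} merely introduces the projections $P^{(j)}_\varphi$ as mean ergodic projections of the twisted semigroups $(e^{-i\varphi t}T_j(t))_{t\geq 0}$ and asserts that the induction on $m$ from Theorem~\ref{AP} carries over to the continuous time scale, yielding the same limit formula with the multiplicative constraints $\prod\lambda_i=1$ replaced by the additive ones $\sum\varphi_i=0$. You have simply filled in the details the paper omits (invariance of almost periodicity under character twisting, the two-case induction step, and the density plus uniform boundedness argument), and these are all correct.
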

\noindent The integrals in (\ref{entEq-cont}) are defined strongly. Recall that a semigroup $(T(t))_{t\geq 0}$ is called totally ergodic if the semigroup $(e^{i\varphi t} T(t))_{t\geq 0}$ is mean ergodic for
every $\varphi\in\mb{R}$.

\begin{proof}
Since almost periodic $C_0$-semigroups are totally ergodic by the standard
density argument, 
%Note that semigroups with relatively weakly compact orbits are mean ergodic (see
%e.g. \cite[Thm. I.2.25]{eisner-book}), 
we can for each $T_j(\cdot)$ ($1\leq j\leq m$) define the
projections $P^{(j)}_\varphi$ as the mean ergodic projections of
$(e^{-i\varphi t}T_j(t))_{t\geq0}$ with range
$\{y\in X|\,T_j(t)y=e^{i\varphi t} y\,\forall\,t\geq0\}$. 
Let $\sigma_j$ denote the point spectrum of the generator $B_j$ of the
semigroup $T_j(\cdot)$ on $i\mb{R}$ ($1\leq j\leq m$).

The proof can then be concluded by an induction argument on $m$ 
analogous to the discrete case
 showing that the limit of the entangled Ces\`aro means $(\ref{entEq-cont})$ is given by
\begin{equation}
 \sum_{\substack{\varphi_j\in\sigma_j\, (1\leq j\leq m)\\\sum_{i\in\alpha^{-1}(a)}\varphi_i=0\, (1\leq a\leq k)}}
P^{(m)}_{\varphi_m}A_{m-1}P^{(m-1)}_{\varphi_{m-1}}A_{m-2}\ldots A_1P^{(1)}_{\varphi_1}.
\end{equation}
\end{proof}

We shall use the following analogue of Theorem \ref{J-Gl-dL}, see e.g. \cite[Theorem III.5.7]{eisner-book}.

\begin{T}\label{J-Gl-dL-cont}\emph{(Continuous Jacobs--Glicksberg--de Leeuw decomposition)}
Let $X$ be a Banach space and let $T(\cdot)\subset\mathcal{L}(X)$ be a
$C_0$-semigroup with relatively weakly compact orbits, 
i.e., such that $\{T(t)x:\, t\in [0,\infty)\}$ is relatively compact in $X$ in the weak
topology for every $x\in X$. 
Then $X=X_r\oplus X_s$, where
\begin{eqnarray*}
X_r &=& \overline{\emph{lin}}\left\{x \,\in\, X|\ T(t)x= e^{i\varphi t}  x\,\forall\,t\geq0 \text{ for some } \varphi \in \mb{R} \right\}, \\
X_s &=& \left\{ x \in X|\, \lim_{M\ni j\to\infty}T(t)x=0 \text{ weakly
  for some } M\subset[0,\infty) \text{ with density }1 \right\},
\end{eqnarray*}
with both subspaces being invariant under $T$.
In addition, if $X'$ is separable, then there exists a set $M\subset[0,\infty)$ with
density $1$ such that $\lim_{M\ni t\to\infty}T(t)|_{X_s}=0$ weakly.
\end{T}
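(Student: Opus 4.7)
The approach is to mirror the proof of Theorem \ref{J-Gl-dL} in continuous time, using the weak-operator-topology closure of the semigroup. Concretely, set $\mc{S}:=\overline{\{T(t):t\geq 0\}}^{\mr{WOT}}\subset\mc{L}(X)$. First I would establish that $\mc{S}$ is a compact semitopological semigroup: compactness follows from the weak compactness of all orbits $\{T(t)x:t\geq 0\}$ together with Tychonoff's theorem (viewing $\mc{L}(X)$ inside a product of weakly compact sets indexed by $x\in X$), while separate continuity of multiplication follows from the uniform bound $\sup_{t\geq 0}\|T(t)\|<\infty$, itself a consequence of weak compactness of orbits and the uniform boundedness principle.

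Having set up this semigroup, I would invoke the abstract Jacobs--de Leeuw--Glicksberg theorem for compact semitopological semigroups: $\mc{S}$ carries a unique minimal closed ideal $K$ which is a compact topological group, and its neutral element $Q\in\mc{L}(X)$ is a continuous projection commuting with every $T(t)$. Setting $X_r:=QX$ and $X_s:=(I-Q)X$ gives the required $T(t)$-invariant topological decomposition $X=X_r\oplus X_s$.

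To identify $X_r$ with $\clin\{x:T(t)x=e^{i\varphi t}x\ \forall\,t\geq 0,\ \text{some }\varphi\in\mb{R}\}$, I would use that on $X_r$ the closure of $\{T(t)|_{X_r}\}$ becomes the compact group $K$, so the strongly continuous semigroup homomorphism $t\mapsto T(t)|_{X_r}$ factors through a continuous one-parameter subgroup of $K$. Peter--Weyl-style harmonic analysis on $K$ then decomposes $X_r$ into closed subspaces on which this one-parameter subgroup acts by characters $t\mapsto e^{i\varphi t}$, producing the continuous unimodular eigenvectors whose closed linear span is $X_r$; the reverse inclusion is trivial since such eigenvectors satisfy $Qx=x$ by averaging $Q$ along the group. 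For $x\in X_s$ one has $Qx=0$, which forces $0$ to be a weak accumulation point of $\{T(t)x:t\geq 0\}$, and the continuous analogue of Lemma \ref{lemma:K-vN} (Lebesgue density replacing counting density) then yields a measurable set $M\subset[0,\infty)$ of density one along which $T(t)x\to 0$ weakly. The final clause, under separability of $X'$, follows as in the proof of Theorem \ref{thm-general-case}: the weak topology is metrizable on the bounded weakly compact orbits, so a countable dense subset $\{x_n\}\subset X_s$ can be treated simultaneously by a diagonal argument combined with the continuous Koopman--von Neumann lemma, producing a single density-one set $M$ working uniformly on $X_s$.

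The points I expect to be most delicate are the verification of separate continuity of multiplication on $\mc{S}$ (which requires equicontinuity of $\mc{S}$ when tested against weakly relatively compact sets) and the transfer from characters of the opaque compact group $K$ to genuine $C_0$-semigroup eigenvectors $T(t)x=e^{i\varphi t}x$; the latter requires care with the strong continuity of $t\mapsto T(t)|_{X_r}$ when transported through the isomorphism with the one-parameter subgroup of $K$.
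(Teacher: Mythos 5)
First, a point of reference: the paper does not prove Theorem \ref{J-Gl-dL-cont} at all --- it is quoted as a known result from \cite[Theorem III.5.7]{eisner-book} --- so your outline can only be compared with the standard proof in the literature, and that is indeed the route you take (enveloping WOT-compact semitopological semigroup $\mc{S}$, minimal ideal $K$ which is a compact group with unit $Q$, $X_r=QX$, $X_s=\ker Q$, character theory on $K$). The set-up and the identification of $X_r$ are essentially correct. Two minor remarks there: separate continuity of multiplication on $\mc{S}$ in the weak operator topology is automatic and needs no equicontinuity, since for fixed $R$ both $S\mapsto SR$ and $S\mapsto RS$ are WOT-continuous (the latter because $\langle RSx,x'\rangle=\langle Sx,R'x'\rangle$); and the transfer from characters of $K$ to eigenvalues $e^{i\varphi t}$ is unproblematic, because for a $K$-eigenvector $x$ the map $t\mapsto\chi(T(t)Q)$ is a continuous unimodular multiplicative function of $t$ by strong continuity of the semigroup, hence of the form $e^{i\varphi t}$.

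The genuine gap is in the treatment of $X_s$. From $Qx=0$ you correctly conclude that $0$ lies in the weak closure of the orbit $\{T(t)x:\,t\geq0\}$. But Lemma \ref{lemma:K-vN-cont} converts Ces\`aro convergence $\frac1t\int_0^t|\langle T(s)x,x'\rangle|\,ds\to0$ into density-one convergence; it does not convert ``$0$ is a weak accumulation point of the orbit'' into anything. The implication that a flight vector is almost weakly stable --- i.e.\ that $0\in\overline{\{T(t)x\}}^{\,w}$ forces $\lim_{t\to\infty}\frac1t\int_0^t|\langle T(s)x,x'\rangle|\,ds=0$ for every $x'\in X'$ --- is the analytic core of the theorem and requires its own argument (via the invariant mean on $\mc{S}$, or a Wiener-type argument); as written, your proof simply skips it. Moreover, even granting per-functional Ces\`aro convergence, the definition of $X_s$ demands a single density-one set $M$ (for each fixed $x$) along which $T(t)x\to0$ against all functionals simultaneously; this already requires the separability/metrizability reduction on the weakly compact orbit closure and a diagonal argument over countably many functionals --- the mechanism you invoke only for the final clause. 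Finally, to know that the stated $X_s$ coincides with $\ker Q$ you also need the converse inclusion $X_s\subseteq\ker Q$; this follows because every element of $K$ acts invertibly on $X_r$, so no nonzero vector of $X_r$ can have $0$ as a weak cluster point of its orbit, but it should be said.
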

\noindent The density of a set $M\subset [0,\infty)$ is
defined by 
\[
  d(M)=\lim_{t\to\infty} \frac{\lambda(M\cap[0,t])}{t}\leq 1,
\]
with $\lambda(\cdot)$ denoting the Lebesgue measure, whenever the above limit exists. 

We further need the following continuous version of Lemma
\ref{lemma:K-vN}.

\begin{Le}\label{lemma:K-vN-cont}\emph{(Koopman--von Neumann,
    continuous version)}
For a continuous function $f:[0,\infty)\to[0,\infty)$ the following assertions are equivalent.
\begin{enumerate}[(a)]
\item $\displaystyle\lim_{t\to\infty}\frac{1}{t}\int_{[0,t]} f(s)\,ds=0$.
\item There exists a subset $M$ of $[0,\infty)$ with density $1$ such that $\lim_{s\in M,\,s\to\infty} f(s)=0$.
\end{enumerate}
\end{Le}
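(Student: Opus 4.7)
The plan is to transport the classical discrete proof of Koopman--von Neumann (Lemma \ref{lemma:K-vN}) to the continuous setting, replacing sums over $\{1,\ldots,N\}$ by Lebesgue integrals over $[0,t]$ and counting measure by Lebesgue measure. Continuity of $f$ renders every level set $\{s : f(s) \geq c\}$ closed and hence Lebesgue measurable, so no regularity issue arises. As in the discrete case, the direction (b)$\Rightarrow$(a) tacitly uses boundedness of $f$, which holds in the continuous analogue of Theorem \ref{thm-general-case} where the lemma is invoked with $f(s)$ equal to the norm of a bounded operator family applied to a fixed vector.

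For (b)$\Rightarrow$(a), I would fix $\varepsilon > 0$, choose $s_0$ with $f(s) < \varepsilon$ for every $s \in M$ with $s \geq s_0$, and split
\[
\frac{1}{t}\int_0^t f(s)\,ds = \frac{1}{t}\int_0^{s_0} f\,ds + \frac{1}{t}\int_{M\cap [s_0,t]} f\,ds + \frac{1}{t}\int_{M^c\cap [s_0,t]} f\,ds.
\]
The first summand tends to $0$, the second is bounded by $\varepsilon$, and the third is at most $\|f\|_\infty \cdot \lambda(M^c \cap [0,t])/t \to 0$ by the density-$1$ assumption on $M$. Hence $\limsup_{t\to\infty}\frac{1}{t}\int_0^t f(s)\,ds \leq \varepsilon$, and (a) follows.

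For (a)$\Rightarrow$(b), for each $k \in \mb{N}^+$ set $E_k := \{s \geq 0 : f(s) \geq 1/k\}$. Markov's inequality gives
\[
\frac{\lambda(E_k \cap [0,t])}{t} \leq \frac{k}{t}\int_0^t f(s)\,ds \longrightarrow 0,
\]
so $d(E_k) = 0$ for every $k$. I would then choose inductively $0 = t_0 < t_1 < t_2 < \cdots$ tending to infinity with $\lambda(E_k \cap [0,t])/t \leq 1/k$ for all $t \geq t_k$, set $E := \bigcup_{k\geq 1}(E_k \cap [t_k, t_{k+1}))$, and take $M := [0,\infty) \setminus E$. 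The inclusions $E_1 \subseteq E_2 \subseteq \cdots$ imply $E \cap [0,t] \subseteq E_n \cap [0,t]$ whenever $t \in [t_n, t_{n+1})$, whence $\lambda(E \cap [0,t])/t \leq 1/n \to 0$ and $d(M) = 1$. Finally, for $s \in M$ with $s \geq t_k$ one has $s \in [t_j, t_{j+1})$ for some $j \geq k$ with $s \notin E_j$, and therefore $f(s) < 1/j \leq 1/k$, giving $\lim_{M \ni s \to \infty} f(s) = 0$.

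I do not anticipate any serious obstacle: continuity of $f$ bypasses measurability issues, Markov's inequality directly controls the level sets, and the exhaustion construction of $E$ is the standard diagonal one. The only substantive difference from the discrete case is that in (b)$\Rightarrow$(a) the integral over $M^c$ is controlled via $\|f\|_\infty$, and this is the sole place where boundedness of $f$ enters.
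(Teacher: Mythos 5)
Your proof is correct and is exactly the standard Koopman--von Neumann argument (Markov's inequality on the level sets $E_k$, the diagonal exhaustion along $[t_k,t_{k+1})$, and the three-way split for the converse) that the paper does not reproduce but delegates to \cite[Lemma III.5.2]{eisner-book}. You are also right to flag the boundedness issue: the hypothesis ``bounded'' appears in the discrete Lemma \ref{lemma:K-vN} but was dropped from the continuous statement, and without it (b)$\Rightarrow$(a) genuinely fails (take $f$ vanishing off triangular spikes at the integers of width $2/n^3$ and height $n^4$, so the spikes carry density zero yet $\frac{1}{t}\int_0^t f\to\infty$), though boundedness does hold where the lemma is applied in Theorem \ref{thm-general-case-cont}.
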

%\noindent We refer to e.g.~Petersen \cite[p.~65]{petersen:1983} for
\noindent For the proof, which is analogous to the discrete case, see
e.g.~\cite[Lemma III.5.2]{eisner-book}.

\vspace{0.1cm}

The main result of this section is the following. 

\begin{T}\label{thm-general-case-cont}
Let $X$ be a Banach space, $m\in\mb{N}$, 
$T_m(\cdot),\ldots, T_{m}(\cdot)$ be $C_0$-semigroups on $X$, 
$A_1,\ldots,
A_{m-1}\in \mathcal{L}(X)$, and $\alpha:\{1,\ldots, m\}\to\{1,\ldots ,
k\}$ be a surjective map for some $k,m\in\mb{N}$.  Assume the following. 
\begin{itemize}
\item[(A1$_c$)] The semigroup $T_m(\cdot)$ is bounded and totally ergodic and $T_j(\cdot)$ has 
relatively weakly compact orbits for every  
$1\leq j\leq m-1$.
\item[(A2$_c$)] Every $A_j$ is compact on the orbits of $T_j(\cdot)$, i.e., 
$\{A_jT_j(t) x:\, t\in [0,\infty)\}$ is relatively compact in $X$
for every $x\in X$ and $1\leq j\leq m-1$.
\end{itemize}
Then
the entangled ergodic averages 
\begin{equation*}
\frac{1}{t^k}\int_{[0,t]^k}T_m(s_{\alpha(m)})A_{m-1}T_{m-1}(s_{\alpha(m-1)})A_{m-2}\ldots A_{1}T_1(s_{\alpha(1)})\,ds_1\,\ldots\,ds_k
\end{equation*}
converge strongly. Denoting the generator of $T_j(\cdot)$ by $B_j$ ($1\leq j\leq m$), the strong limit is given by the formula
\[
\sum_{\substack{\varphi_j\in\sigma_j\, (1\leq j\leq m)\\\sum_{i\in\alpha^{-1}(a)}\varphi_i=0\, (1\leq a\leq k)}}
P^{(m)}_{\varphi_m}A_{m-1}P^{(m-1)}_{\varphi_{m-1}}A_{m-2}\ldots A_1P^{(1)}_{\varphi_1},
\] 
where $\sigma_j=P_\sigma(B_j)\cap i\mb{R}$ and $P^{(j)}_{\varphi_j}$ is the projection onto the eigenspace
of $B_j$ corresponding to $i\varphi_j$, i.e., the mean
ergodic projection of the semigroup $(e^{-i\varphi_j t}T_j(t))_{t\geq0}$.
\end{T}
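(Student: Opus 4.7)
The plan is to mirror the proof of Theorem \ref{thm-general-case}, substituting in each step the continuous counterpart already at hand: Theorem \ref{AP-cont}, the continuous Jacobs--Glicksberg--de Leeuw decomposition of Theorem \ref{J-Gl-dL-cont}, and the continuous Koopman--von Neumann lemma (Lemma \ref{lemma:K-vN-cont}). The first step is to reduce to the case $T_1(\cdot) = \cdots = T_{m-1}(\cdot) =: T(\cdot)$ and $A_1 = \cdots = A_{m-1} =: A$, exactly as in Corollary \ref{cor:several-op-rel-comp}: on $\mc{X} := X^m$ introduce the $C_0$-semigroups
\[
\mc{T}(t) := \mr{diag}(T_1(t), \ldots, T_{m-1}(t), I), \quad \mc{S}(t) := \mr{diag}(I, \ldots, I, T_m(t)),
\]
and the off-diagonal operator $\mc{A} := ((\delta_{a-1,b}A_a))_{a,b}$. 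The diagonal semigroup $\mc{T}(\cdot)$ inherits relatively weakly compact orbits from the $T_j(\cdot)$, $\mc{S}(\cdot)$ inherits boundedness and total ergodicity from $T_m(\cdot)$, and $\mc{A}$ satisfies (A2$_c$) with respect to $\mc{T}(\cdot)$. Evaluating on $(x,0,\ldots,0)^T$ and reading off the last coordinate yields both the convergence and the correct spectral form of the limit, with imaginary eigenvalues $i\varphi_j$ of $B_j$ replacing the unimodular eigenvalues $\lambda_j$ of the discrete case.

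With $T(\cdot)$ now having relatively weakly compact orbits, apply Theorem \ref{J-Gl-dL-cont} to get $X = X_r \oplus X_s$ with commuting projections $P_r, P_s$. Inserting $I = P_r + P_s$ after each occurrence of $T(s_{\alpha(j)})$ for $j = 1, \ldots, m-1$ and telescoping from right to left, the integrand becomes a sum indexed by the rightmost slot at which $P_s$ appears: either no such slot exists (the purely-$P_r$ summand) or it equals some $a \in \{1, \ldots, m-1\}$, in which case $P_r$ appears at all slots $1, \ldots, a-1$. Theorem \ref{AP-cont}, applied to the almost periodic semigroup $T(\cdot)|_{X_r}$, handles the purely-$P_r$ summand and delivers the claimed limit. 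Thus it suffices to show that for every $x \in X$ and every $1 \leq a \leq m-1$, the average over $[0,t]^k$ of
\[
T_m(s_{\alpha(m)}) A T(s_{\alpha(m-1)}) A \cdots A T(s_{\alpha(a)}) P_s A T(s_{\alpha(a-1)}) P_r A \cdots A T(s_{\alpha(1)}) P_r x
\]
tends to zero as $t \to \infty$.

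Fix such an $a$ and put
\[
K := \{A T(s_{a-1}) P_r A \cdots A T(s_1) P_r x : s_1, \ldots, s_{a-1} \geq 0\},
\]
which is relatively compact by (A2$_c$) together with the continuous-parameter analogue of Lemma \ref{lemma:rel-comp} (the subsequence argument is verbatim). Let $Y$ denote the smallest closed $T$-invariant subspace containing $K$; since $K$ is norm-compact and $T(\cdot)$ has relatively weakly compact orbits, a direct diagonal argument as in the proof of Theorem \ref{thm-general-case} shows that $\mr{Orb}(K) := \{T(t)y : t \geq 0, y \in K\}$ is relatively weakly compact, and separability of $Y$ combined with metrizability of the weak topology on weakly compact subsets of separable Banach spaces yields separability of $Y'$. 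The final assertion of Theorem \ref{J-Gl-dL-cont}, applied to $T(\cdot)|_Y$, then produces $M \subset [0,\infty)$ of density $1$ with $T(s) P_s y \to 0$ weakly along $M$ for every $y \in K$. Relative compactness of $\{A T(s) P_s y : s \geq 0\}$ upgrades this to $\|A T(s) P_s y\| \to 0$, and strong operator convergence of the family $A T(\cdot) P_s$ is automatically uniform on the compact set $K$. Combining this density-$1$ uniform vanishing with Lemma \ref{lemma:K-vN-cont}, applied to the $s_{\alpha(a)}$-integration while the remaining integrands stay uniformly bounded, yields the desired vanishing.

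The main obstacle is the separability-and-metrizability bookkeeping in the third step: it is precisely the norm compactness supplied by (A2$_c$), rather than mere relative weak compactness, that enables the upgrade from weak to strong convergence and produces the uniform bound needed to invoke Lemma \ref{lemma:K-vN-cont}. Everything else is a faithful continuous translation of the discrete proof, with sequences replaced by nets in $[0,\infty)$ and lower densities of subsets of $\mb{N}$ replaced by Lebesgue densities.
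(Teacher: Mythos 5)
Your proposal is correct and follows essentially the same route as the paper's own proof: reduction to a single semigroup and single operator via the diagonal construction on $X^m$, the continuous Jacobs--Glicksberg--de Leeuw splitting, Theorem \ref{AP-cont} for the reversible part, and the compactness-plus-separability argument feeding into Lemma \ref{lemma:K-vN-cont} for the stable part. The only detail worth making explicit (which the paper does) is that separability of $Y$ in the continuous setting comes from strong continuity of the semigroup, via the countable dense subset obtained from rational time parameters.
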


\begin{proof}
Using the arguments from Proposition \ref{cor:several-op-rel-comp}, we may again assume that we have $T_j(\cdot)=T(\cdot)$ and $A_j=A$ for $1\leq j\leq m-1$. It is to be shown that

\begin{equation}\label{eq:etwas}
 \frac{1}{t^k}\int_{[0,t]^k}T_m(s_{\alpha(m)})AT(s_{\alpha(m-1)})A\ldots AT(s_{\alpha(1)})x\,ds_1\,\ldots\,ds_k
\end{equation}
converges for every $x\in X$. By Theorem \ref{J-Gl-dL-cont}, the integrand can be split with the help of the projections $P_r$ and $P_s$ onto $X_r$ and $X_s$, respectively, and we have

\begin{eqnarray*}
&&T_m(s_{\alpha(m)})AT(s_{\alpha(m-1)})A\ldots AT(s_{\alpha(1)})x
\\&=& \sum_{a=1}^{m-1}T_m(s_{\alpha(m)})A\ldots AT(s_{\alpha(a)})P_sAT(s_{\alpha(a-1)})P_rA\ldots AT(s_{\alpha(1)})P_rx
\\&+& T_m(s_{\alpha(m)})AT(s_{\alpha(m-1)})P_rA\ldots AT(s_{\alpha(1)})P_rx.
\end{eqnarray*}

The integral means of the second term converge by Theorem \ref{AP-cont} to the desired limit, and so it is enough to show that the rest converges in mean to $0$, i.e. for every $x\in X$ and
$1\leq a\leq m-1$ one has
\begin{equation}\label{eq:zero-cont}
\lim_{t\to\infty} \frac{1}{t^k}\int_{[0,t]^k}
T_m(s_{\alpha(m)})A\ldots AT(s_{\alpha(a)})P_s A T(s_{\alpha(a-1)})P_rA \ldots
AT(s_{\alpha(1)})P_rx=0. 
\end{equation}

\noindent Consider
$$
  K:=\{A T(s_{a-1})P_rA \ldots AT(s_1)P_rx|\ s_{a-1},\ldots,s_1\in[0,\infty)\}.
$$
This set is relatively compact by Lemma \ref{lemma:rel-comp} and the
assumption. As in the discrete case, one can show that the dual space
of the smallest $T(\cdot)$-invariant subspace $Y$ containing $K$ is
separable. Note that the separability of $Y$ itself follows from the strong continuity of the semigroup, as it yields a dense countable subset 
\[
\{A T(s_{a-1})P_rA \ldots AT(s_1)P_rx|\ s_{a-1},\ldots,s_1\in[0,\infty)\cap\mb{Q}\} 
\]
of $K$.

Theorem \ref{J-Gl-dL-cont} then assures the existence of a set $M\subset [0,\infty)$ with density $1$ such
that
$$
  \lim_{s_j\in M,\,s_j\to\infty} T(s_j)P_sy=0\quad  \text{weakly} \quad \text{for
    every } y\in K
$$
implying $\lim_{s_j\in M,\,s_j\to\infty} AT(s_j)P_sy=0$ weakly for every $y\in K$.
Since $\{A T(s)P_sy:\, s\in [0,\infty) \}$ is relatively compact, and strong convergence in $\mc{L}(X)$ implies uniform strong convergence on compact subsets of $X$, we obtain
$$\lim_{s_j\in M,\, s_j\to\infty} \|A T(s_j)P_sy\|=0 \quad \text{uniformly in } y\in K.$$
Since the set
$M$ has density $1$, the equation (\ref{eq:zero-cont}) follows from
Lemma \ref{lemma:K-vN-cont}.
\end{proof}

Note that, as in the discrete case, the class of semigroups satisfying
assumption (A1$_c$) is large including e.g.~bounded $C_0$-semigroups on reflexive Banach spaces.

\vspace{0.4cm}

\noindent\textbf{Acknowledgement.} The authors are grateful to Rainer Nagel and Marco Schreiber for valuable discussions and comments.

%\textbf{(TO DO: ISEM Buch zitieren! Oder nicht?!)} 

\end{document}